\newcommand{\EEQ}{\end{equation}}
\newcommand{\rfb}[1]{\mbox{\rm
   (\ref{#1})}\ifx\undefined\stillediting\else:\fbox{$#1$}\fi}
                         \newcommand{\ud}     {{\rm d}}
\newcommand{\bt}{\begin{Theorem}}
\newcommand{\et}{\end{Theorem}}
\newcommand{\br}{\begin{remark}}
\newcommand{\er}{\end{remark}}
\newcommand{\bc}{\begin{Corollary}}
\newcommand{\ec}{\end{Corollary}}
\newcommand{\el}{\end{Lemma}}
\newcommand{\bd}{\begin{definition}}
\newcommand{\ed}{\end{definition}}
\newcommand{\N}  {\mathbb{N}}
\newcommand{\R}  {\mathbb{R}}
\newcommand{\mm}    {{\hbox{\hskip 0.5pt}}}
\newcommand{\bluff} {{\hbox{\raise 15pt \hbox{\mm}}}}
\newfont{\Blackboard}{msbm10 scaled 1200}
\newfont{\roma}{cmr10 scaled 1200}
\def\CC{\rm \hbox{C\kern-.56em\raise.4ex
         \hbox{$\scriptscriptstyle |$}\kern+0.5 em }}
\newtheorem{corollary}{Corollary}[section]
\newtheorem{definition}[corollary]{Definition}
\newtheorem{remark}[corollary]{Remark}
\numberwithin{equation}{section}
\newtheorem{lem}{Lemma}[section]
\newtheorem{prop}{Proposition}[section]
\newtheorem{thm}{Theorem}[section]
\def\ds{\displaystyle}
\newcommand{\re}{\mathrm{Re}}
\begin{document}
\thispagestyle{empty}
  
\title[Stabilization for beam equation with a degenerated Kelvin-Voigt damping]{Stabilization for Euler-Bernoulli beam equation with a local degenerated Kelvin-Voigt damping}

\author{Fathi Hassine}
\address{UR Analysis and Control of PDEs, UR 13ES64, Department of Mathematics, Faculty of Sciences of Monastir, University of Monastir, Tunisia}
\email{fathi.hassine@fsm.rnu.tn}

\begin{abstract}
We consider the Euler-Bernoulli beam equation with a local Kelvin-Voigt dissipation type in the interval $(-1,1)$. The coefficient damping is only effective in $(0,1)$ and is degenerating near the $0$ point with a speed at least equal to $x^{\alpha}$ where $\alpha\in(0,5)$. We prove that the semigroup corresponding to the system is polynomially stable and the decay rate depends on the degeneracy speed $\alpha$.
\end{abstract}    

\subjclass[2020]{35B35, 35B40, 93C05, 93D15, 93D20}
\keywords{Polynomial stability, degenerate Kelvin-Voigt damping}

\maketitle

\tableofcontents
 
\section{Introduction}
In the last two decades, owing to the large applications of smart materials in modern technology, there has been an increasing research on elastic systems with viscoelastic damping \cite{BIW,BSBSM}. When the smart materials are added into the elastic structures, the Young’s modulus, the mass density and the damping coefficients are changed accordingly. This passive method, on the one hand, makes the distributed control practically applicable but on the other hand, brings some new mathematical challenges which attract an increasing research interests. 

The wave equation involving Kelvin-Voigt damping was extensively studied in last decades. For the high dimensional case when the damping is localized in a suitable open subset, of the domain under consideration, it is shown that the energy of the system decays polynomially or exponentially depending on the regularity of the coefficient damping \cite{liu-rao2,tibou2,burq2} and it decays logarithmically \cite{AHR2,burq2} with an arbitrary localized. In one dimensional case the associated semigroup is analytic when the entire medium is of the Kelvin-Voigt type \cite{huang2} however a luck of exponential stability was shown when the damping is localized \cite{chen-liu-liu,hassine1,hassine-souayeh}. Moreover, only a polynomial decay rate of the energy is shown with a sharp decay rate equal to $t^{-2}$ \cite{hassine1,liu-rao1}. In the other hand a uniform decay rate still holds if the damping coefficient is smooth enough \cite{liu-liu2,zhang}. 

The Euler-Bernoulli plate equation with a Kelvin-Voigt damping were considered by several authors. The associated system operator generates an analytic semigroup if the damping is global \cite{chen-liu-liu,liu-liu2} and an interesting Riesz basis property was developed for such a system in \cite{JTW}. While in multi-dimensional case this property is not anymore valid and we obtain by means of Carleman estimate a logarithmic decay rate only \cite{hassine3} (see also the transmission wave/plate equation \cite{hassine2}) in the one-dimensional case the semigroup still enjoys the uniform decay rate of the energy \cite{hassine1,liu-liu} (see also the transmission wave/plate equation \cite{hassine4}) if the damping is localized. 

For the spectral analysis it was shown in	\cite{ZG} for the 1-d case that the essential spectrum of the system operator is identified to be an interval on the left real axis. Moreover, under some assumptions on the coefficients, it was shown that the essential spectrum also contains continuous spectrum only, and the point spectrum consists of isolated eigenvalues of finite algebraic multiplicity. The asymptotic behavior of eigenvalues is presented when the coefficients are regular (say $\mathcal{C}^{3}$ on the close domain) and bounded by bellow by some a non negative constant.

The remainder of the paper is organized as follows. In section \ref{IBKV} we stat the problem and the main result. In section \ref{PLBKV} we give some lemmas which are useful for the proofs of Theorem \ref{IBKV3}. Sections \ref{PSBKV} and \ref{DTBKV} are devoted to prove Theorem \ref{IBKV3}.
\section{Problem statement and main result}\label{IBKV}
The system that we are concerned with is the following Euler–Bernoulli beam equation clamped at two boundaries with
internal Kelvin–Voigt damping
\medskip
\begin{equation}\label{IBKV1}
\left\{
\begin{array}{ll}
\ds\ddot{u}(x,t)+u''''(x,t)+\left(b(x)\dot{u}''(x,t)\right)''=0&(x,t)\in(-1,1)\times(0,+\infty),
\\
u(-1,t)=u(1,t)=0&t\in(0,+\infty),
\\
u'(-1,t)=u'(1,t)=0&t\in(0,+\infty),
\\
\ds u(x,0)=u^{0}(x),\;\dot{u}(x,0)=u^{1}(x)&x\in(-1,1),
\end{array}
\right.
\end{equation}
where $b$ is the coefficient damping which assumed to be a non-negative function such that
$$
b(x)=\left\{\begin{array}{ll}
0&\text{if } x\in(-1,0)
\\
a(x)&\text{if } x\in(0,1).
\end{array}\right.
$$
The natural energy of system \eqref{IBKV1} is given by
$$
E(t)=\frac{1}{2}\int_{-1}^{1}\left(|\dot{u}(x,t)|^{2}+|u''(x,t)|^{2} \right)\,\ud x
$$
and it is dissipated according to the following law
\begin{equation*}
\frac{\ud}{\ud t}E(t)=-\int_{-1}^{1}b(x)\,|\dot{u}''(x,t)|^{2}\,\ud x,\;\forall\,t>0.
\end{equation*}
Let $\ds\mathcal{H}=H_{0}^{2}(-1,1)\times L^{2}(-1,1)$ be the Hilbert space endowed with the inner product define for $(u,v),\,(\tilde{u},\tilde{v})\in\mathcal{H}$ by
$$ 
\left\langle (u,v),(\tilde{u},\tilde{v})\right\rangle_{\mathcal{H}}=\int_{-1}^{1}u''(x)\,.\,\overline{\tilde{u}}''(x)\,\ud x+\int_{-1}^{1}v(x)\,.\,\overline{\tilde{v}}(x)\,\ud x.
$$
By setting $U(t)=(u(t),v(t))$ and $U^{0}=(u^{0},u^{1})$ we can rewrite system \eqref{IBKV1} as a first order differential equation as follows
\begin{equation}\label{IBKV2}
\dot{U}(t)=\mathcal{A}U(t),\qquad U(0)=U^{0}\in\mathcal{D}(\mathcal{A}),
\end{equation}
where
$$
\mathcal{A}(u,v)=(v,-u''''-(bv'')''),
$$
with
$$
\mathcal{D}(\mathcal{A})=\left\{(u,v)\in\mathcal{H}:\;(v,u''''+(bv'')'')\in \mathcal{H}\right\}. 
$$
Following to \cite{chen-liu-liu,hassine1,hassine3,liu-liu} the operator $\mathcal{A}$ is a generator of a $C_{0}$-semigroup of contraction $e^{t\mathcal{A}}$ on the Hilbert space $\mathcal{H}$. Therefore for every $U^{0}\in\mathcal{D}(\mathcal{A})$ the Cauchy problem \eqref{IBKV2} admits a unique solution $U(t)\in\mathcal{C}([0,+\infty),\mathcal{D}(\mathcal{A}))\cap\mathcal{C}^{1}([0,+\infty),\mathcal{H})$. Moreover, it is also shown that the imaginary axis is a subset of the resolvent set of $\mathcal{A}$ then the semigroup generated by $\mathcal{A}$ is strongly stable, i.e for any $U^{0}\in\mathcal{H}$ we have
$$
\lim_{t\rightarrow+\infty}\|e^{t\mathcal{A}}U^{0}\|_{\mathcal{H}}=0.
$$

The difficulty of the problem we consider consists in the fact that we don't assume that the localizing coefficient $b$ satisfies a condition of the form $a(x)>a_{0}$ in $(0,1)$, for some $a_{0}>0$, as in most of the previous literature \cite{hassine1, hassine4,liu-liu,liu-rao2} where an exponential stability was proven. Besides, a more general assumption on the coefficient was taken into account for the muti-dimensional case in \cite{hassine2,hassine3} where a logarithmic decay rate of the energy via a Carleman estimate was proven.

As far as we know the only paper which tackles the plate equation with a local degenerated dissipation is the one of Guzm\`an and Tucsnak in \cite{GT}. In fact the authors consider the following initial and boundary value problem modeling the damped vibrations of a simply supported plate
$$
\left\{\begin{array}{ll}
u''+\Delta^{2} u+a(x)u'=0&\text{in }\Omega\times(0,+\infty)
\\
u=\Delta u=0&\text{on }\Gamma\times(0,+\infty)
\\
u(0)=u^{0},\;u'(0)=u^{1}&\text{in }\Omega
\end{array}\right.
$$
where $\Omega$ is an open bounded subset of $\R^{2}$ with boundary $\Gamma$ and $a\in L^{\infty}(\Omega)$ be such that $a(x)\geq 0$ for almost all $x\in\Omega$ and where it is assumed
$$
\int_{\omega}\frac{\ud x}{a^{p}(x)}<\infty
$$ 
for some $p>0$ with $\omega$ an arbitrary non-empty subset of $\Omega$. They show that the usual observability inequality for the undamped problem implies polynomial decay estimates for the damped problem with a rate equal to $t^{-2p}$.

The aim of this paper is to describe the decay rate of the semigroup associate to \eqref{IBKV1} when the damping term $a$ degenerates near $0$ as $x^{\alpha}$, precisely we have the following 
\begin{thm}\label{IBKV3}
Assuming that $a$ is a non-negative and an increasing function in such a way that $a\in\mathcal{C}([0,1))\cap\mathcal{C}^{2}((0,1))$ and there exist $\alpha\in(0,5)$ and $\kappa\geq 0$ such that
\begin{equation}\label{IBKV4}
	\lim_{x\to 0^{+}}\frac{a(x)}{x^{\alpha}}=\kappa.\tag{A \addtocounter{equation}{1}\theequation}
\end{equation}
Then, the semigroup $e^{t\mathcal{A}}$ associated to system \eqref{IBKV1} is polynomially stable precisely for every $\varepsilon>0$ there exists $C>0$ such that
$$
\|e^{t\mathcal{A}}(u^{0},u^{1})\|_{\mathcal{H}}^{2}\leq\frac{C}{(t+1)^{(\tau(\alpha)+\varepsilon)k}}\left\|(u^{0},u^{1})\right\|_{\mathcal{D}(\mathcal{A}^{k})}^{2}\quad\forall\,(u^{0},u^{1})\in\mathcal{D}(\mathcal{A}^{k})\;\forall\,t\geq 0.
$$
where
$$
\tau(\alpha)=\left\{\begin{array}{ll}
\ds\frac{5-\alpha}{3-\alpha}&\ds\text{if }\alpha\in\left(0,\frac{5}{3}\right]
\\
\ds\frac{5+\alpha}{1+\alpha}&\ds\text{if }\alpha\in\left(\frac{5}{3},3\right]
\\
\ds\frac{4}{\alpha-1}&\text{if }\alpha\in(3,5).
\end{array}\right.
$$
\end{thm}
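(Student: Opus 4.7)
The plan is to invoke the Borichev-Tomilov characterization of polynomial decay for bounded $C_{0}$-semigroups on Hilbert spaces. Since $\mathcal{A}$ generates a contraction semigroup and $i\R\subset\rho(\mathcal{A})$ (as recalled in the excerpt), it is enough to establish the high-frequency resolvent bound
$$
\|(i\lambda-\mathcal{A})^{-1}\|_{\mathcal{L}(\mathcal{H})}\leq C\,|\lambda|^{2/\tau(\alpha)}\qquad\text{as }|\lambda|\to+\infty.
$$
This gives $\|e^{t\mathcal{A}}U^{0}\|_{\mathcal{H}}^{2}\leq C(t+1)^{-\tau(\alpha)}\,\|U^{0}\|_{\mathcal{D}(\mathcal{A})}^{2}$ for data in the domain; the statement with $k$ and the $\varepsilon$-loss then follows by standard interpolation between successive powers of the domain of $\mathcal{A}$.

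Fix $F=(f,g)\in\mathcal{H}$ and $U=(u,v)\in\mathcal{D}(\mathcal{A})$ with $(i\lambda-\mathcal{A})U=F$. Componentwise this reads $v=i\lambda u-f$ and $-\lambda^{2}u+u''''+(bv'')''=g+i\lambda f$. Taking the real part of $\langle F,U\rangle_{\mathcal{H}}$ produces the dissipation identity
$$
\int_{0}^{1}a(x)\,|v''(x)|^{2}\,\ud x=\operatorname{Re}\langle F,U\rangle_{\mathcal{H}}\leq \|F\|_{\mathcal{H}}\|U\|_{\mathcal{H}}.
$$
Substituting $v''=i\lambda u''-f''$ and absorbing the lower-order contribution of $f''$ gives $\int_{0}^{1}a(x)\,|u''|^{2}\,\ud x\leq C|\lambda|^{-2}\|F\|_{\mathcal{H}}\|U\|_{\mathcal{H}}$, and combining this with the comparison $a(x)\geq c\,\delta^{\alpha}$ on $(\delta,1)$ (which follows from \eqref{IBKV4} together with monotonicity of $a$) yields the localized estimate
$$
\int_{\delta}^{1}|u''|^{2}\,\ud x\leq C\,\delta^{-\alpha}\,|\lambda|^{-2}\,\|F\|_{\mathcal{H}}\|U\|_{\mathcal{H}}.
$$

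The core of the argument is to \emph{propagate} this partial information both into the undamped region $(-1,0)$ and across the degenerate strip $(0,\delta)$. On $(-1,0)$ the equation reduces to the undamped ODE $u''''-\lambda^{2}u=g+i\lambda f$ with clamped boundary at $x=-1$, so a one-dimensional cut-off multiplier argument (using $\eta\,\bar u$ and $\eta\,x\,\bar u'$-type multipliers with $\eta$ localized around the interface $\{x=\delta\}$) converts the partial estimate into control of the traces $u(\delta), u'(\delta), u''(\delta), u'''(\delta)$ and hence of $\|u\|_{H^{2}(-1,0)}$, in the spirit of \cite{hassine1,liu-liu}. On the strip $(0,\delta)$ the dissipation cannot control $u''$ pointwise, so one must combine the weighted estimate $\int_{0}^{\delta}a\,|u''|^{2}\,\ud x$ with Hardy- and Gagliardo-Nirenberg-type interpolations against the full $H^{2}$-norm of $u$ in order to bound $\int_{0}^{\delta}|u''|^{2}\,\ud x$ by an explicit combination of powers of $\delta$ and $|\lambda|$.

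Assembling these pieces produces an inequality of the schematic form $\|U\|_{\mathcal{H}}^{2}\leq \bigl(C_{1}|\lambda|^{p_{1}}\delta^{-q_{1}}+C_{2}|\lambda|^{p_{2}}\delta^{q_{2}}+C_{3}|\lambda|^{p_{3}}\delta^{q_{3}}\bigr)\,\|F\|_{\mathcal{H}}\|U\|_{\mathcal{H}}$ whose exponents depend explicitly on $\alpha$. The final step is to set $\delta=|\lambda|^{-\mu}$ and optimize $\mu$ so as to equalize the two dominating contributions; this yields the exponent $2/\tau(\alpha)$, and the switch of which pair of terms dominates at $\alpha=5/3$ and at $\alpha=3$ is precisely what forces the piecewise definition of $\tau(\alpha)$. \textbf{The main obstacle} is the analysis on the shrinking strip $(0,\delta)$: the interpolation and Hardy-type estimates there must be sharp enough to produce the stated exponents, and their quality deteriorates as $\alpha$ grows; this deterioration is what ultimately caps the admissible range at $\alpha<5$, beyond which the method no longer yields a polynomial decay rate.
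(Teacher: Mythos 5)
Your scaffolding matches the paper's: the Borichev--Tomilov reduction with resolvent growth $|\lambda|^{2/\tau(\alpha)}$ (Proposition \ref{PSBKV2}, with the $k$ and $\varepsilon$ statements following by the standard iteration/interpolation), the dissipation identity giving $\|a^{1/2}v''\|=O(|\lambda|^{-\gamma/2})$ and $\|a^{1/2}u''\|=O(|\lambda|^{-\gamma/2-1})$, the splitting into $(-1,0)$, a shrinking degenerate strip, and $(\delta,1)$ where $a\geq c\,\delta^{\alpha}$, and a final optimization in the exponent of $\delta=|\lambda|^{-\mu}$ producing the piecewise $\tau(\alpha)$ --- this is exactly the architecture of Sections \ref{PSBKV}--\ref{DTBKV}, where $\xi_{n}\in[\lambda_{n}^{-\delta}/2,\lambda_{n}^{-\delta}]$ and the constrained minimization \eqref{PSBKV87} yields the three regimes. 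But there is a genuine gap at the step you yourself flag as the main obstacle. The multiplier identity on the undamped side (the paper's \eqref{PSBKV13}, multiplier $(x+1)\bar u'$) produces boundary terms at the interface $x=0$, namely $v(0)$, $u''(0)$, $u'''(0)$, which by the transmission conditions \eqref{PSBKV19}--\eqref{PSBKV22} are the traces of $v$ and of the moment $M=u''+av''$ at the \emph{degenerate} endpoint. Since $a$ vanishes there, the dissipation and Hardy inequalities give only weighted bounds that degenerate at $0$, and a cut-off multiplier argument ``in the spirit of \cite{hassine1,liu-liu}'' does not apply: those papers assume $a\geq a_{0}>0$ on the damped region, precisely the hypothesis dropped here. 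The paper's actual mechanism, absent from your plan, is threefold: (i) a pigeonhole choice of $\xi_{n}$ at which $|v(\xi_{n})|+|v'(\xi_{n})|+|M(\xi_{n})|+|M'(\xi_{n})|$ is small (via the Hardy Lemmas \ref{PSBKV27}--\ref{PSBKV106} and the interpolation Lemma \ref{PSBKV112}, giving \eqref{PSBKV73}); (ii) integrating the equation once and twice over $(0,\xi_{n})$ to transport these bounds to $M'(0)$ and $M(0)$ (\eqref{PSBKV60}, \eqref{PSBKV62}); and (iii) --- the real novelty --- the auxiliary functions $z_{n}^{\pm}$ built from $v$ and its weighted double integral, solving a second-order ODE by the Picard-type series \eqref{PSBKV47}--\eqref{PSBKV48} whose normal convergence under \eqref{PSBKV51} yields the pointwise interface bound $|v(0)|=o(\lambda_{n}^{p})$ \eqref{PSBKV56} that feeds \eqref{PSBKV13}.

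Moreover, the one concrete mechanism you propose for the strip would fail: you suggest bounding $\int_{0}^{\delta}|u''|^{2}$ by combining the weighted estimate with ``Hardy- and Gagliardo--Nirenberg-type interpolations against the full $H^{2}$-norm of $u$''. That is circular --- the full $H^{2}$-norm is exactly the $O(1)$ quantity under estimation --- and since $a\sim x^{\alpha}$ vanishes on the strip, the weighted bound carries no unweighted information there; interpolation inequalities of the type of Lemma \ref{PSBKV112} run in the wrong direction (they control $u'$ by $u''$ and $u$). The paper instead multiplies the equation by $\bar u$ on $(0,\xi_{n})$ (identity \eqref{PSBKV66}) and closes the estimate \eqref{PSBKV67} using the endpoint trace bounds from (i)--(iii) together with \emph{two-step} Hardy chains $v\to v'\to v''$ (conditions \eqref{PSBKV108}--\eqref{PSBKV109}); it is this double application of Hardy --- requiring $\beta=\alpha-4\in(-1,1)$ when $\alpha>3$ --- rather than a deteriorating interpolation estimate, that caps the admissible range at $\alpha<5$. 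In short, your plan correctly predicts the shape of the answer, the role of $\delta=|\lambda|^{-\mu}$, and even the regime switches at $\alpha=5/3$ and $\alpha=3$, but it is missing the interface construction (pigeonhole point plus the $z_{n}^{\pm}$ series) that makes the argument work, and its substitute for the strip estimate cannot produce the stated exponents.
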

In the case of the wave equation involving a Kelvin-Voigt damping with a coefficient that satisfying the same assumption as \eqref{IBKV4} with $\alpha\in(0,1)$ we proved in \cite{hassine7} that the semigroup associated to the system is polynomially stable with a rate equal to $\ds\frac{3-\alpha}{2(1-\alpha)}$. Thus as $\alpha$ goes to $1^{-}$ the order of polynomial stability goes to $+\infty$ which is consistent with the exponential stability \cite{LZ} when $\alpha=1$ and as $\alpha$ goes to $0^{+}$ the order of polynomial stability $\ds\frac{3-\alpha}{2(1-\alpha)}$ goes to $\ds\frac{3}{2}$ which is different to the optimal order of stability \cite{hassine1,liu-rao1} which is equal to $2$ when $\alpha=0$.

\begin{figure}[htbp]
	\centering
		\includegraphics{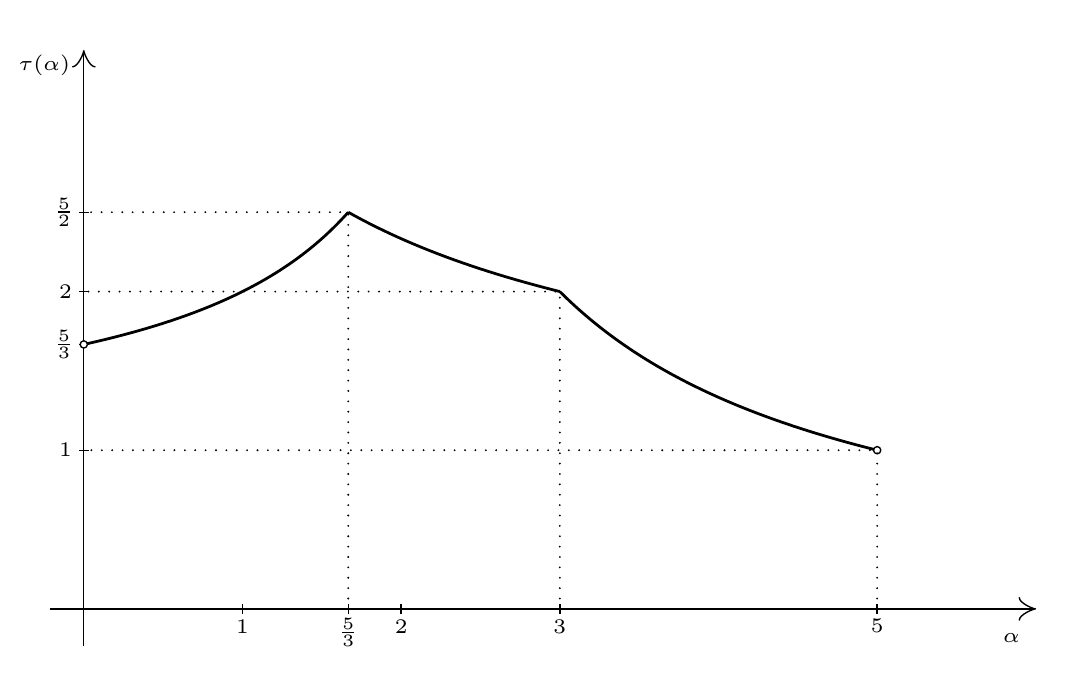}
	\caption{The dependence of the decay rate as a function of $\alpha$.}
	\label{fig1:PSBKV}
\end{figure}

Unlike the wave equation whose decay rate is increasing as the regularity of the coefficient damping becomes more and more regular, the plate equation, as illustrated in Figure \ref{fig1:PSBKV}, is with a decay rate that increases in $\ds\Big(0,\frac{5}{3}\Big]$ and decreases in $\ds\Big[\frac{5}{3},5\Big)$ and this is due essentially to the higher order boundary terms at the interface. However, the uniform stability is shown when the discontinuity of material properties take place and this is corresponding to $\alpha=0$. We believe that the purely elastic waves are almost completely reflected at the interface due to the ``strong'' degeneracy of the coefficient damping and thus are very weakly dissipated by the damping mechanism. Noting that the case $\alpha\geq 5$ still an interesting open question since an answer to this question would give a whole description of the behavior of the decay rate as $\alpha$ goes to $+\infty$.

Our approach consists to use a local analysis approach \cite{hassine7,LZ} combined with the classical iterative method \cite{hassine6} in order to deal with the higher order of the operator. This method can be used method can be applied for other PDE's with any higher order operator.
\section{Some preliminary Lemmas}\label{PLBKV}
The proof of Theorem \ref{IBKV3} rely on the Hardy inequalities and an interpolation inequality presented below. Let's first recall the following Hardy's inequality.
\begin{lem}\cite{stepanov}\label{PSBKV26}
Let $L>0$ and $\rho_{1},\,\rho_{2}>0$ be two weight functions defied on $(0,L)$. Then the following conditions are equivalent:
\begin{equation}\label{PSBKV28}
\int_{0}^{L}\rho_{1}(x)|Tf(x)|^{2}\,\ud x\leq C\int_{0}^{L}\rho_{2}(x)|f(x)|^{2}\,\ud x,
\end{equation}
and
$$
K=\sup_{x\in(0,L)}\left(\int_{0}^{L-x}\rho_{1}(x)\,\ud x\right)\left(\int_{L-x}^{L}\left[\rho_{2}(x)\right]^{-1}\,\ud x\right)<\infty
$$
where $\ds Tf(x)=\int_{0}^{x}f(s)\,\ud x$. Moreover, the best constant $C$ in \eqref{PSBKV28} satisfies $K\leq C\leq 2K$.
\end{lem}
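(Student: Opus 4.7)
The plan is to prove the two implications of the equivalence separately, together with the two-sided constant comparison $K\le C\le 2K$. The statement is a one-dimensional weighted Hardy inequality in the diagonal $L^{2}$ case, and the proof follows the classical Muckenhoupt--Bradley--Stepanov strategy.

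For the implication \eqref{PSBKV28}$\,\Rightarrow\,K<\infty$ I would test \eqref{PSBKV28} against an explicit one-parameter family of cut-off functions. For each $r\in(0,L)$ take $f_{r}=\rho_{2}^{-1}\mathbf{1}_{I_{r}}$, where $I_{r}$ is chosen as one of the two subintervals appearing in the definition of $K$, so that the right-hand side of \eqref{PSBKV28} reduces to $\int_{I_{r}}\rho_{2}^{-1}$. Since $Tf_{r}$ is constant and equal to that same integral on the complementary interval $J_{r}$, the left-hand side of \eqref{PSBKV28} is bounded below by $\bigl(\int_{I_{r}}\rho_{2}^{-1}\bigr)^{2}\int_{J_{r}}\rho_{1}$. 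Dividing by the common factor and taking the supremum over $r$ yields $K\le C$.

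For the implication $K<\infty\,\Rightarrow\,$\eqref{PSBKV28} I would combine an integration-by-parts identity with a weighted Cauchy--Schwarz. Setting $\alpha(x)=\int_{x}^{L}\rho_{1}(t)\,\ud t$, so that $\alpha'=-\rho_{1}$ and $\alpha(L)=0$, and using $Tf(0)=0$, integration by parts produces $\int_{0}^{L}\rho_{1}|Tf|^{2}\,\ud x=2\int_{0}^{L}\alpha(x)\,Tf(x)\,f(x)\,\ud x$. The pointwise Cauchy--Schwarz bound $|Tf(x)|\le G(x)^{1/2}\bigl(\int_{0}^{x}\rho_{2}|f|^{2}\,\ud s\bigr)^{1/2}$, with $G(x)=\int_{0}^{x}\rho_{2}^{-1}(s)\,\ud s$, followed by a further Cauchy--Schwarz in the outer $x$-integral with an appropriate balancing of $\alpha$, $\rho_{1}$, and $\rho_{2}$, reduces the right-hand side to a multiple of $\bigl(\int_{0}^{L}\rho_{1}|Tf|^{2}\,\ud x\bigr)^{1/2}\bigl(\int_{0}^{L}\rho_{2}|f|^{2}\,\ud x\bigr)^{1/2}$ in which the constant is controlled by $\sup_{x}\alpha(x)G(x)$. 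This supremum coincides with $K$, and absorbing the Hardy integral back to the left then delivers \eqref{PSBKV28} with $C\le 2K$.

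The main obstacle is the careful weight-balancing in the sufficiency step: the auxiliary weight chosen in the Cauchy--Schwarz must be arranged so that Fubini collapses the resulting double integral to a multiple of $\int_{0}^{L}\rho_{2}|f|^{2}$ rather than reintroducing the Hardy quantity, and so that the supremum $K$ appears with the correct scaling. The sharp factor $2$ in $C\le 2K$, as opposed to the cruder constant produced by a naive Schur estimate, is specific to the $L^{2}$ setting and relies on the integration-by-parts identity above together with an optimal choice of the inner weight, which is the content of Stepanov's sharpening of the Muckenhoupt--Bradley theorem.
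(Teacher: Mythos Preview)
The paper does not prove this lemma at all: it is quoted from \cite{stepanov} and used as a black box to obtain Lemmas~\ref{PSBKV27} and~\ref{PSBKV106}. So there is no ``paper's proof'' to compare against; your outline is simply the standard Muckenhoupt--Bradley argument for the one-dimensional weighted $L^{2}$ Hardy inequality, and it is the natural thing to write here.

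Two remarks on your sketch. First, as printed, the lemma's $K$ has the two intervals in the wrong order for the operator $Tf(x)=\int_{0}^{x}f$: the classical Muckenhoupt constant is $\sup_{r}\bigl(\int_{r}^{L}\rho_{1}\bigr)\bigl(\int_{0}^{r}\rho_{2}^{-1}\bigr)$, whereas the displayed formula (after fixing the double use of $x$) gives $\sup_{r}\bigl(\int_{0}^{r}\rho_{1}\bigr)\bigl(\int_{r}^{L}\rho_{2}^{-1}\bigr)$. Your choice $\alpha(x)=\int_{x}^{L}\rho_{1}$, $G(x)=\int_{0}^{x}\rho_{2}^{-1}$, and your test function $f_{r}$ supported on an initial segment, are the correct ones for the stated $T$; the resulting $\sup_{x}\alpha(x)G(x)$ is the standard Muckenhoupt constant, not the $K$ literally printed here. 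This is a typographical slip in the paper and does not affect the subsequent applications, but your sentence ``this supremum coincides with $K$'' is only true for the intended, not the printed, $K$.

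Second, the sufficiency step as you describe it (integration by parts, then Cauchy--Schwarz with the split $\rho_{1}^{1/2}|Tf|\cdot\alpha\rho_{1}^{-1/2}|f|$ and absorption) does not by itself give the sharp factor $2$; a direct execution along these lines yields $C\le 4K$. Obtaining $C\le 2K$ in the $L^{2}$ case requires the Talenti--Tomaselli refinement, namely inserting the auxiliary weight $G^{1/2}$ inside the Hardy operator before applying Cauchy--Schwarz and then using $\alpha G\le K$ twice in the Fubini step. Your last paragraph correctly flags this as the delicate point, but the mechanism you describe (``absorbing the Hardy integral back to the left'') is the $4K$ argument, not the $2K$ one.
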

By taking $\rho_{1}(x)=x^{\beta}$ and $\rho_{2}(x)=x^{\alpha}$ in Lemma \ref{PSBKV26}, we have the following
\begin{lem}\cite[Lemma 2.3]{LZ}\label{PSBKV27}
Let $x^{\frac{\alpha}{2}}y'\in L^{2}(0,1)$ satisfy $y(1)=0$. Assume that $\beta\geq \alpha-2$ when $\alpha>1$ and $\beta>-1$ when $0\leq\alpha<1$. Then there exists positive constant $C$ independent of $y$, such that
$$
\int_{0}^{1}x^{\beta}|y(x)|^{2}\,\ud x \leq C\int_{0}^{1}x^{\alpha}|y'(x)|^{2}\,\ud x.
$$
\end{lem}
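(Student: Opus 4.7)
The goal is to derive this weighted Hardy inequality from the general Stepanov inequality of Lemma \ref{PSBKV26} in the regime $0\le\alpha<1$, and by a direct integration-by-parts argument in the regime $\alpha>1$; the two cases require genuinely different treatments because the reciprocal weight $x^{-\alpha}$ fails to be locally integrable at $0$ as soon as $\alpha\ge 1$.

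\textbf{Case $0\le\alpha<1$, $\beta>-1$.} My plan is to reduce to Lemma \ref{PSBKV26} by reflecting around the endpoint where $y$ vanishes. Define $z(u)=y(1-u)$ on $(0,1)$, so that $z(0)=y(1)=0$ and $z'(u)=-y'(1-u)$. Then $z(u)=Tz'(u)=\int_0^u z'(s)\,\ud s$, and after the change of variable $u=1-x$ the two integrals in the lemma become
$$
\int_0^1 x^\beta|y(x)|^2\,\ud x=\int_0^1(1-u)^\beta|z(u)|^2\,\ud u,\qquad \int_0^1 x^\alpha|y'(x)|^2\,\ud x=\int_0^1(1-u)^\alpha|z'(u)|^2\,\ud u.
$$
I then apply Lemma \ref{PSBKV26} with $L=1$, $\rho_1(u)=(1-u)^\beta$, $\rho_2(u)=(1-u)^\alpha$. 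Substituting $w=1-u$ in the Stepanov test, the product reduces to $\frac{1-x^{\beta+1}}{\beta+1}\cdot\frac{x^{1-\alpha}}{1-\alpha}$, which is finite precisely because $\beta+1>0$ and $1-\alpha>0$, and remains bounded as $x\to 0^+$ (thanks to $1-\alpha>0$) and as $x\to 1^-$ (thanks to $\beta+1>0$). This yields $K<\infty$ and hence the desired inequality.

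\textbf{Case $\alpha>1$, $\beta\ge\alpha-2$.} Here the Stepanov condition fails, so I would first treat the borderline weight $\beta=\alpha-2$ by hand. On $[\varepsilon,1]$ with $\varepsilon>0$, integrate $\int_\varepsilon^1 x^{\alpha-2}|y|^2\,\ud x$ by parts with antiderivative $\frac{x^{\alpha-1}}{\alpha-1}$ for the weight. Using $y(1)=0$ the boundary term at $1$ disappears, and the boundary term at $\varepsilon$, namely $-\frac{\varepsilon^{\alpha-1}|y(\varepsilon)|^2}{\alpha-1}$, is non-positive because $\alpha>1$; hence it can be dropped to obtain
$$
\int_\varepsilon^1 x^{\alpha-2}|y|^2\,\ud x\le\frac{2}{\alpha-1}\int_\varepsilon^1 x^{(\alpha-2)/2}|y|\cdot x^{\alpha/2}|y'|\,\ud x.
$$
Cauchy--Schwarz then gives $\int_\varepsilon^1 x^{\alpha-2}|y|^2\,\ud x\le\frac{4}{(\alpha-1)^2}\int_0^1 x^\alpha|y'|^2\,\ud x$, and monotone convergence as $\varepsilon\to 0^+$ removes the cut-off. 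For the remaining exponents $\beta>\alpha-2$ I simply use the pointwise bound $x^\beta\le x^{\alpha-2}$ on $(0,1)$, which reduces the inequality to the borderline case already proved.

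\textbf{Expected difficulty.} The only delicate point is justifying the integration by parts in the limit $\varepsilon\to 0^+$ when $\alpha>1$; the key observation that makes the argument robust is that the missing boundary contribution at $0$ enters with the \emph{right} sign (non-positive, because $\alpha>1$), so one never has to prove that $x^{\alpha-1}|y(x)|^2$ actually vanishes at the origin. All other ingredients are mechanical once the reflection to $z(u)=y(1-u)$ is set up for the low-$\alpha$ case.
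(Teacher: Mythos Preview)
Your proof is correct. The paper's own treatment is a single line: it just says that Lemma~\ref{PSBKV27} follows from Lemma~\ref{PSBKV26} by taking $\rho_{1}(x)=x^{\beta}$ and $\rho_{2}(x)=x^{\alpha}$, without splitting into the two regimes.

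For $0\le\alpha<1$ your reflection-plus-Stepanov argument is exactly the paper's approach, made explicit. For $\alpha>1$ you take a genuinely different route: a self-contained integration by parts, which has the virtue of being elementary and of producing the sharp constant $4/(\alpha-1)^{2}$. Your claim that ``the Stepanov condition fails'' in this range, however, is an artifact of combining the reflection with the condition as written in Lemma~\ref{PSBKV26}. If instead one applies the Stepanov test directly in the original variable (equivalently, uses the form of the Muckenhoupt/Stepanov condition adapted to a function vanishing at the \emph{right} endpoint), the relevant supremum is
\[
\sup_{r\in(0,1)}\int_{0}^{r}s^{\beta}\,\ud s\;\int_{r}^{1}s^{-\alpha}\,\ud s
=\sup_{r\in(0,1)}\frac{r^{\beta+1}}{\beta+1}\cdot\frac{r^{1-\alpha}-1}{\alpha-1},
\]
which is finite precisely when $\beta\ge\alpha-2$, so Lemma~\ref{PSBKV26} covers both regimes uniformly --- this is what the paper intends. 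Your case split is therefore unnecessary, but your integration-by-parts argument is a perfectly valid and arguably more transparent alternative for the $\alpha>1$ range.
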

To deal with the case $\alpha=1$ we need the following Hardy's inequality.
\begin{lem}\cite[Lemma 3.3]{GLL}\label{PSBKV106}
Let $x^{\frac{\alpha}{2}}y'\in L^{2}(0,1)$ satisfy $y(1)=0$. Assume that $0\leq \alpha\leq 2$. Then there exists positive constant $C$ independent of $y$, such that
$$
\int_{0}^{1}|y(x)|^{2}\,\ud x \leq C\int_{0}^{1}x^{\alpha}|y'(x)|^{2}\,\ud x.
$$
\end{lem}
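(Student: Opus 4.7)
My plan is to handle the full range $0 \leq \alpha \leq 2$ uniformly through a weighted integration by parts, regularizing at the origin by $\epsilon > 0$ to avoid any issue about the pointwise behavior of $y$ near $x=0$. I first observe that Lemma \ref{PSBKV27}, applied with $\beta = 0$, already covers $\alpha \in [0,1) \cup (1,2]$: indeed $\beta = 0 > -1$ handles $\alpha \in [0,1)$, and $\beta = 0 \geq \alpha - 2$ handles $\alpha \in (1,2]$. The genuinely new case is thus the borderline $\alpha = 1$, which falls outside both hypotheses of Lemma \ref{PSBKV27}; nevertheless, a single argument dispatches the entire interval $[0,2]$ with an explicit constant.

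For $0 < \epsilon < 1$, integrating $|y|^{2}$ by parts on $[\epsilon, 1]$ and using $y(1) = 0$ gives
\begin{equation*}
\int_{\epsilon}^{1}|y(x)|^{2}\,\ud x = -\,\epsilon\,|y(\epsilon)|^{2} - 2\,\re\int_{\epsilon}^{1}x\,y(x)\,\overline{y'(x)}\,\ud x.
\end{equation*}
Since the first term on the right is non-positive, discarding it only strengthens the bound. I then split the weight as $x = x^{1-\alpha/2}\cdot x^{\alpha/2}$ and apply Cauchy--Schwarz:
\begin{equation*}
\left|\int_{\epsilon}^{1} x\,y\,\overline{y'}\,\ud x\right| \leq \left(\int_{\epsilon}^{1} x^{2-\alpha}|y|^{2}\,\ud x\right)^{1/2}\left(\int_{\epsilon}^{1} x^{\alpha}|y'|^{2}\,\ud x\right)^{1/2}.
\end{equation*}
The key algebraic point is that $\alpha \leq 2$ forces $x^{2-\alpha} \leq 1$ on $(0,1)$, so the first factor is bounded by $\bigl(\int_{\epsilon}^{1}|y|^{2}\,\ud x\bigr)^{1/2}$.

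Combining the two displays yields
\begin{equation*}
\int_{\epsilon}^{1}|y|^{2}\,\ud x \;\leq\; 2\Bigl(\int_{\epsilon}^{1}|y|^{2}\,\ud x\Bigr)^{1/2}\Bigl(\int_{0}^{1}x^{\alpha}|y'|^{2}\,\ud x\Bigr)^{1/2};
\end{equation*}
dividing by the common factor on both sides (trivial if it vanishes) and squaring produces $\int_{\epsilon}^{1}|y|^{2}\,\ud x \leq 4\int_{0}^{1}x^{\alpha}|y'|^{2}\,\ud x$, and monotone convergence as $\epsilon \to 0^{+}$ delivers the stated inequality with $C = 4$.

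The point I expect to require the most care is the boundary term at $x = 0$, particularly in the endpoint case $\alpha = 2$: applying Cauchy--Schwarz to $y(x) = -\int_{x}^{1}y'(s)\,\ud s$ yields only $|y(x)|^{2} \leq C/x$ when $\alpha = 2$, so $\epsilon\,|y(\epsilon)|^{2}$ is merely bounded and need not vanish with $\epsilon$. It is precisely for this reason that the integration by parts must be carried out on $[\epsilon,1]$ before passing to the limit; because the discarded boundary term has the favorable sign, this anticipated obstacle dissolves without extra work, and the same argument covers all $\alpha \in [0,2]$ at once.
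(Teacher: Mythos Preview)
Your proof is correct. The paper does not supply its own argument for this lemma; it merely cites \cite[Lemma~3.3]{GLL}, so there is no in-paper proof to compare against. Your integration-by-parts identity on $[\epsilon,1]$, together with the observation that the boundary contribution $-\epsilon\,|y(\epsilon)|^{2}$ has the favorable sign, is a clean self-contained route that yields the explicit constant $C=4$ uniformly over $0\le\alpha\le 2$, including the borderline case $\alpha=1$ that falls outside both hypotheses of Lemma~\ref{PSBKV27}.
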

The following lemma is an interpolation inequality that takes into account the size of the domain. 
\begin{lem}\label{PSBKV112}
There exists $K\geq 0$ such that for every $c,\,d\in\R$ with $c<d$ we have
$$
\|f'\|_{L^{2}(c,d)}\leq K(d-c)\|f''\|_{L^{2}(c,d)}+(d-c)^{-1}\|f\|_{L^{2}(c,d)}\quad \forall\,f\in H^{2}(c,d).
$$
\end{lem}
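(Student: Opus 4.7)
The plan is to reduce to the reference interval $(0,1)$ by an affine rescaling, since the proposed inequality is precisely the scaling-covariant form of the standard $H^{2}$-$H^{1}$-$L^{2}$ interpolation. For $f\in H^{2}(c,d)$ set $g(y)=f(c+(d-c)y)$; a direct change of variables gives
$$
\|g\|_{L^{2}(0,1)}=(d-c)^{-1/2}\|f\|_{L^{2}(c,d)},\quad \|g'\|_{L^{2}(0,1)}=(d-c)^{1/2}\|f'\|_{L^{2}(c,d)},\quad \|g''\|_{L^{2}(0,1)}=(d-c)^{3/2}\|f''\|_{L^{2}(c,d)}.
$$
Once I produce a constant $K\geq 0$ such that $\|g'\|_{L^{2}(0,1)}\leq K\bigl(\|g''\|_{L^{2}(0,1)}+\|g\|_{L^{2}(0,1)}\bigr)$ for every $g\in H^{2}(0,1)$, dividing the resulting estimate by $(d-c)^{1/2}$ yields the claimed inequality on $(c,d)$ with the same constant.

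For the inequality on $(0,1)$, I fix once and for all a test function $\theta\in\mathcal{C}_{c}^{\infty}(0,1)$ with $\int_{0}^{1}\theta=1$. For $g\in H^{2}(0,1)$ and $x\in(0,1)$ I use the representation
$$
g'(x)=\int_{0}^{1}\theta(y)\,g'(y)\,\ud y+\int_{0}^{1}\theta(y)\int_{y}^{x}g''(s)\,\ud s\,\ud y.
$$
Integration by parts turns the first integral into $-\int_{0}^{1}\theta'(y)g(y)\,\ud y$ (the boundary terms vanish since $\theta$ is compactly supported), which by the Cauchy--Schwarz inequality is bounded by $\|\theta'\|_{L^{2}(0,1)}\|g\|_{L^{2}(0,1)}$. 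A second application of Cauchy--Schwarz bounds the double integral by $\|\theta\|_{L^{2}(0,1)}\|g''\|_{L^{2}(0,1)}$. Setting $K=\max\bigl(\|\theta\|_{L^{2}(0,1)},\|\theta'\|_{L^{2}(0,1)}\bigr)$ gives the pointwise estimate $\|g'\|_{L^{\infty}(0,1)}\leq K\bigl(\|g''\|_{L^{2}(0,1)}+\|g\|_{L^{2}(0,1)}\bigr)$, which is stronger than the required $L^{2}$ bound because the interval $(0,1)$ has unit length.

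The main obstacle here is essentially absent: the inequality is a routine Gagliardo--Nirenberg type interpolation, and the only care required is to ensure the constant is independent of the length $d-c$. This is automatic from the dimensional analysis, since the exponents $1$ and $-1$ on $(d-c)$ that appear in the statement are precisely dictated by the scaling weights $1/2$, $3/2$, $-1/2$ of the three Sobolev norms involved, so after reducing to $(0,1)$ and scaling back no extraneous powers of $d-c$ remain.
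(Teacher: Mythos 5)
Your proof is correct and follows the same overall strategy as the paper's: reduce to the unit interval by affine rescaling (your scaling exponents $-1/2$, $1/2$, $3/2$ are all right), prove the unscaled interpolation inequality there via a pointwise bound on the first derivative obtained from an averaged form of the fundamental theorem of calculus, and scale back. The difference is in how the base value of the derivative is controlled. The paper takes $\xi\in(0,\frac{1}{3})$, $\eta\in(\frac{2}{3},1)$ and uses the mean value theorem to produce $\lambda$ with $|f'(\lambda)|\leq 3(|f(\xi)|+|f(\eta)|)$, then averages over $\xi$ and $\eta$; since the mean value theorem fails for complex-valued functions, the paper must first reduce to real-valued $f$. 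You instead average the identity $g'(x)=g'(y)+\int_{y}^{x}g''(s)\,\ud s$ against a fixed $\theta\in\mathcal{C}_{c}^{\infty}(0,1)$ with $\int_{0}^{1}\theta=1$ and integrate by parts to turn $\int_{0}^{1}\theta g'$ into $-\int_{0}^{1}\theta' g$; this avoids the mean value theorem entirely, works verbatim for complex-valued $f$ (relevant here, since the lemma is applied to the complex-valued $M_{n}$), and yields the stronger $L^{\infty}$ bound on $g'$ with the explicit constant $K=\max\bigl(\|\theta\|_{L^{2}},\|\theta'\|_{L^{2}}\bigr)$. One remark common to both arguments: each actually proves $\|f'\|_{L^{2}(c,d)}\leq K\bigl((d-c)\|f''\|_{L^{2}(c,d)}+(d-c)^{-1}\|f\|_{L^{2}(c,d)}\bigr)$, with the constant on \emph{both} terms, rather than the inequality exactly as displayed (coefficient $1$ on the $(d-c)^{-1}\|f\|$ term). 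The displayed version is in fact false for linear functions: $f(x)=x-\frac{c+d}{2}$ has $\|f''\|=0$ and $\|f'\|_{L^{2}(c,d)}=\sqrt{12}\,(d-c)^{-1}\|f\|_{L^{2}(c,d)}$. The two-constant form is the correct statement and is also how the lemma is invoked in \eqref{PSBKV39}, so your proof establishes precisely what is needed.
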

\begin{proof}
It is sufficient to prove this result for real-valued function. We assume, that $c=0$ and $d=1$. If $\ds 0<\xi<\frac{1}{3}$ and $\ds\frac{2}{3}<\eta<1$, then there exists $\lambda\in(\xi,\eta)$ such that
$$
|f'(\lambda)|=\left|\frac{f(\eta)-f(\xi)}{\eta-\xi}\right|\leq 3(|f(\xi)|+|f(\eta)|).
$$
It follows that for any $x\in[0,1]$ we have 
\begin{align*}
|f'(x)|&=\left|f'(\lambda)+\int_{\lambda}^{x}f''(t)\,\ud t\right|
\\
&\leq 3|f(\xi)|+3|f(\eta)|+\int_{0}^{1}|f''(t)|\,\ud t.
\end{align*}
Integrating the above inequality with respect to $\xi$ over $(0,\frac{1}{3})$ and with respect to $\eta$ over $(\frac{2}{3},1)$, we obtain
\begin{align*}
\frac{1}{9}|f'(x)|&\leq\frac{1}{3}\int_{0}^{\frac{1}{3}}|f(\xi)|\,\ud\xi+\frac{1}{3}\int_{\frac{2}{3}}^{1}|f(\eta)|\,\ud\eta+\frac{1}{9}\int_{0}^{1}|f''(t)|\,\ud t
\\
&\leq\int_{0}^{1}|f(t)|\,\ud t+\frac{1}{9}\int_{0}^{1}|f''(t)|\,\ud t.
\end{align*}
Hence by H\"older's inequality
$$
\int_{0}^{1}|f'(x)|^{2}\,\ud x\leq K\left(\int_{0}^{1}|f(t)|^{2}\,\ud t+\int_{0}^{1}|f''(t)|^{2}\,\ud t\right).
$$
It follows by a change  of variable that for any finite interval $(c,d)$
$$
\int_{c}^{d}|f'(t)|^{2}\,\ud t\leq K\left((d-c)^{2}\int_{c}^{d}|f''(t)|^{2}\,\ud t+(d-c)^{-2}\int_{c}^{d}|f(t)|^{2}\,\ud t\right).
$$
This complete the proof.
\end{proof}
\section{Polynomial stability}\label{PSBKV}
This section is devoted to prove polynomial stability of the solutions of \eqref{IBKV1} as given in Theorem \ref{IBKV3}. The idea is to estimate the energy norm and boundary terms at the interface by the local viscoelastic damping. The difficulty is to deal with the higher order boundary terms at the interface so that the energy on $(-1,0)$ can be controlled by the viscoelastic damping on $(0,1)$. For this aim we recall the following result
\begin{prop}\cite[Theorem 2.4]{borichevtomilov}\label{PSBKV2}
Let $e^{tB}$ be a bounded $C_{0}$-semigroup on a Hilbert space $X$ with generator $B$ such that $i\R\in\rho(B)$. Then $e^{tB}$ is polynomially stable with order $\ds\frac{1}{\gamma}$ i.e. there exists $C>0$ such that
$$
\|e^{tB}u\|_{X}\leq \frac{C}{t^{\frac{1}{\gamma}}}\|u\|_{\mathcal{D}(B)}\quad\forall\,u\in\mathcal{D}(B)\;\forall\, t\geq 0,
$$
if and only if
$$
\limsup_{|\lambda|\rightarrow\infty}\|\lambda^{-\gamma}(i\lambda I-B)^{-1}\|_{X}<+\infty.
$$
\end{prop}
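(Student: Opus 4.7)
The plan is to prove the Borichev--Tomilov characterization via Plancherel-type estimates that exploit the Hilbert-space structure of $X$, which is precisely what makes this equivalence hold. I would treat the two implications separately; since only the sufficiency of the resolvent bound is invoked later for Theorem~\ref{IBKV3}, I would concentrate the effort there.

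For the necessity direction, I would start from the Laplace-transform identity expressing $(i\lambda I-B)^{-1}$ on $\mathcal{D}(B)$ in terms of the semigroup, namely
$$
(i\lambda I-B)^{-1}u = \int_{0}^{T}e^{-i\lambda t}e^{tB}u\,\ud t + e^{-i\lambda T}(i\lambda I-B)^{-1}e^{TB}u,
$$
valid for $u\in\mathcal{D}(B)$ by integration by parts against $\frac{\ud}{\ud t}(e^{-i\lambda t}e^{tB})$. I would bound the first term by the uniform bound on $\|e^{tB}\|$ and the second using the hypothesized polynomial decay applied to smoother data in $\mathcal{D}(B^{2})$, and then optimize $T$ as a power of $|\lambda|$ to recover $\|(i\lambda I-B)^{-1}\|=O(|\lambda|^{\gamma})$ as $|\lambda|\to\infty$.

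For the sufficiency direction, assume $\|(i\lambda I-B)^{-1}\|\leq M|\lambda|^{\gamma}$ for large $|\lambda|$. Given $u=B^{-k}v\in\mathcal{D}(B^{k})$ with $k$ taken large compared to $\gamma$, a contour-shift (legitimate because $i\R\subset\rho(B)$, the semigroup is bounded, and the resolvent grows only polynomially) justifies the representation
$$
e^{tB}u = \frac{1}{2\pi}\int_{-\infty}^{+\infty}\frac{e^{i\lambda t}}{(i\lambda)^{k}}(i\lambda I-B)^{-1}v\,\ud\lambda
$$
on a dense subspace of $X$, with the singularity at $\lambda=0$ controlled by the boundedness of $B^{-1}$. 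Applying Plancherel in the Hilbert space $X$ to the $X$-valued map $t\mapsto e^{tB}u$ and invoking the resolvent bound produces an $L^{2}$-in-$t$ estimate of the form $\int_{\R}\|e^{tB}u\|_{X}^{2}\,\ud t\lesssim\|v\|_{X}^{2}$. Combining this with the uniform boundedness $\sup_{t}\|e^{tB}\|<\infty$ via interpolation between $L^{2}$ and $L^{\infty}$ in time gives the pointwise bound $\|e^{tB}u\|_{X}\leq C(t+1)^{-1/\gamma}\|v\|_{X}$, i.e.\ the required estimate on $\mathcal{D}(B^{k})$. A further interpolation, or a direct iteration of the semigroup split as $e^{tB/k}\cdots e^{tB/k}$, reduces the smoothing order from $k$ back down to $1$.

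The main obstacle is precisely this quantitative passage from an $L^{2}$-in-time estimate, delivered cleanly by Plancherel, to the pointwise $t^{-1/\gamma}$ decay with the correct loss of derivatives on the initial data. The sharp way, due to Borichev and Tomilov, is a dyadic-in-frequency decomposition coupled with a fractional-power functional calculus for $(I-B)$ that reduces the problem to a scalar Tauberian estimate. This is also exactly the step that genuinely uses the Hilbert structure of $X$: in a general Banach space the equivalence is known to fail, and only a decay with an additional logarithmic loss is obtainable by purely Fourier-analytic means.
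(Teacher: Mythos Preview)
The paper does not prove this proposition at all: it is quoted verbatim as \cite[Theorem 2.4]{borichevtomilov} and immediately applied, with no argument given. There is therefore no ``paper's own proof'' against which to compare your attempt; the paper simply imports the result as a black box and proceeds to verify the resolvent estimate \eqref{PSBKV3}.

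As for the sketch itself, it captures the broad architecture of the Borichev--Tomilov argument (Plancherel on the imaginary axis, exploitation of the Hilbert structure, reduction of smoothing order), but several steps are only gestured at. The inverse-Fourier representation you write for $e^{tB}u$ is not literally correct as stated: the integral over $\R$ of $(i\lambda)^{-k}(i\lambda I-B)^{-1}v$ does not converge near $\lambda=0$ without further modification, and the justification of the contour shift for a merely bounded (not exponentially stable) semigroup requires care. The actual proof in \cite{borichevtomilov} goes through an equivalent operator-theoretic characterization and a quantitative version of the Katznelson--Tzafriri/Arendt--Batty circle of ideas rather than a direct Fourier inversion. Your final paragraph correctly identifies this passage from $L^{2}$-in-time to pointwise decay as the crux, but the ``dyadic-in-frequency decomposition coupled with fractional-power functional calculus'' you invoke is not how Borichev and Tomilov actually close the argument. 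If you want to supply a proof here, you should either reproduce their argument faithfully or cite the reference as the paper does.
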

According to Proposition \ref{PSBKV2} we shall verify that for $\gamma=\tau(\tau)+\varepsilon$ there exists $C_{0}>0$ such that
\begin{equation}\label{PSBKV3}
\inf_{\substack{\|(u,v)\|_{\mathcal{H}}=1
\\
\lambda\in\R}}|\lambda|^{\gamma}\left\|i\lambda(u,v)-\mathcal{A}(u,v)\right\|_{\mathcal{H}}\geq C_{0}.
\end{equation}
Suppose that \eqref{PSBKV3} fails then there exist a sequence of real numbers $\lambda_{n}$ and a sequence of functions $(u_{n},v_{n})_{n\in\N}\subset\mathcal{D}(\mathcal{A})$ such that
\begin{eqnarray}
&\lambda_{n}\,\underset{n\,\rightarrow\,+\infty}{\longrightarrow}\,+\infty&
\\
&\big\|(u_{n},v_{n})\big\|_{\mathcal{H}}=1\quad \forall\,n\in\N,&\label{PSBKV4}
\\
&\lambda_{n}^{\gamma}\left\|i\lambda_{n}(u_{n},v_{n})-\mathcal{A}(u_{n},v_{n})\right\|_{\mathcal{H}}=o(1).&\label{PSBKV9}
\end{eqnarray}
Since, we have
\begin{equation}\label{PSBKV10}
\lambda_{n}^{\gamma}\re\langle(i\lambda_{n}-\mathcal{A})(u_{n},v_{n}),(u_{n},v_{n})\rangle=\int_{0}^{1}a|v_{n}''|^{2}\,\ud x
\end{equation}
then using \eqref{PSBKV4} and \eqref{PSBKV9} we obtain
\begin{equation}\label{PSBKV11}
\|a^{\frac{1}{2}}v_{n}''\|_{L^{2}(0,1)}=o(\lambda_{n}^{-\frac{\gamma}{2}}).
\end{equation}
Define
$$
u_{1,n}=u_{n}\mathds{1}_{(-1,0)},\quad v_{1,n}=v_{n}\mathds{1}_{(-1,0)},\quad u_{2,n}=u_{n}\mathds{1}_{(0,1)},\quad v_{2,n}=v_{n}\mathds{1}_{(0,1)}.
$$
Following to \eqref{PSBKV9} we have
\begin{align}
\lambda_{n}^{\gamma}(i\lambda_{n} u_{1,n}-v_{1,n})&=f_{1,n}\,\longrightarrow\,0\quad\text{in } H^{2}(-1,0),\label{PSBKV5}
\\
\lambda_{n}^{\gamma}(i\lambda_{n} u_{2,n}-v_{2,n})&=f_{2,n}\,\longrightarrow\,0\quad\text{in } H^{2}(0,1),\label{PSBKV18}
\\
\lambda_{n}^{\gamma}(i\lambda_{n} v_{1,n}+u_{1,n}'''')&=g_{1,n}\,\longrightarrow\,0\quad\text{in } L^{2}(-1,0),\label{PSBKV6}
\\
\lambda_{n}^{\gamma}(i\lambda_{n} v_{2,n}+M_{n}'')&=g_{2,n}\,\longrightarrow\,0\quad\text{in } L^{2}(0,1),\label{PSBKV7}
\end{align}
with the transmission conditions
\begin{align}
u_{1,n}(0)=u_{2,n}(0),\quad v_{1,n}(0)=v_{2,n}(0),\label{PSBKV19}
\\
u_{1,n}'(0)=u_{2,n}'(0),\quad v_{1,n}'(0)=v_{2,n}'(0),\label{PSBKV20}
\\
u_{1,n}''(0)=M_{n}(0),\label{PSBKV21}
\\
u_{1,n}'''(0)=M_{n}'(0),\label{PSBKV22}
\end{align}
where we have denoted by
\begin{equation}\label{PSBKV8}
M_{n}=u_{2,n}''+av_{2,n}''=(1+i\lambda_{n}a)u_{2,n}''-\lambda_{n}^{-\gamma}af_{2,n}''.
\end{equation}
By \eqref{PSBKV11} and \eqref{PSBKV18} we find
\begin{equation}\label{PSBKV12}
\|a^{\frac{1}{2}}u_{2,n}''\|_{L^{2}(0,1)}=o(\lambda_{n}^{-\frac{\gamma}{2}-1}).
\end{equation}
One multiplies \eqref{PSBKV6} by $(x+1)\overline{u}_{1,n}'$ integrating by parts over the interval $(-1,0)$ and uses \eqref{PSBKV4} and \eqref{PSBKV5}, one gets
\begin{equation}\label{PSBKV13}
3\|u_{1,n}''\|_{L^{2}(-1,0)}^{2}+\|v_{1,n}\|_{L^{2}(-1,0)}^{2}-|v_{1,n}(0)|^{2}-|u_{1,n}''(0)|^{2}+2\re\left((u_{1,n}'''(0)-u_{1,n}''(0))\overline{u}_{1,n}'(0)\right)=o(\lambda_{n}^{-\gamma}).
\end{equation}
One multiplies \eqref{PSBKV7} by $\overline{v}_{2,n}$ and \eqref{PSBKV6} by $\overline{v}_{1,n}$, integrating and summing up then by the transmission conditions \eqref{PSBKV19}-\eqref{PSBKV22} we get
\begin{multline}\label{PSBKV14}
i\lambda_{n}^{\gamma+1}\left(\|v_{1,n}\|_{L^{2}(-1,0)}^{2}+\|v_{2,n}\|_{L^{2}(0,1)}^{2}\right)+\lambda_{n}^{\gamma}\left(\int_{-1}^{0}u_{1,n}''\overline{v}_{1,n}''\,\ud x+\int_{0}^{1}u_{2,n}''\overline{v}_{2,n}''\,\ud x\right)
\\
+\lambda_{n}^{\gamma}\|a^{\frac{1}{2}}v_{2,n}'\|_{L^{2}(0,1)}^{2}=o(1).
\end{multline}
Taking the inner product of \eqref{PSBKV5} with $u_{1,n}$ in $H^{2}(-1,0)$ and the inner product of \eqref{PSBKV18} with $u_{2,n}$ in $H^{2}(0,1)$ and summing up, we obtain
\begin{equation}\label{PSBKV15}
i\lambda_{n}^{\gamma+1}\left(\|u_{1,n}''\|_{L^{2}(-1,0)}^{2}+\|u_{2,n}''\|_{L^{2}(0,1)}^{2}\right)-\lambda_{n}^{\gamma}\left(\int_{-1}^{0}v_{1,n}''\overline{u}_{1,n}''\,\ud x +\int_{0}^{1}v_{2,n}''\overline{u}_{2,n}''\,\ud x\right)=o(1).
\end{equation}
Subtracting \eqref{PSBKV15} from \eqref{PSBKV14} and taking the imaginary part of the expression then by \eqref{PSBKV11} we derive
\begin{equation}\label{PSBKV17}
\|u_{1,n}''\|_{L^{2}(-1,0)}+\|u_{2,n}''\|_{L^{2}(0,1)}-\left(\|v_{1,n}\|_{L^{2}(-1,0)}+\|v_{2,n}\|_{L^{2}(0,1)}\right)=o(1).
\end{equation}

We set 
$$
z_{n}^{\pm}(x)=\frac{i\lambda_{n}}{\sqrt{1+i\lambda_{n}a(x)}}\int_{x}^{\xi_{n}}\!\!\!\!\int_{t}^{\xi_{n}}v_{2,n}(s)\,\ud s\,\ud t\pm v_{2,n}(x)\quad\forall\,x\in(0,\xi_{n}),
$$
where $\xi_{n}\in(0,1)$ that would be specified later on.
\\
We derive $z_{n}^{\pm}$ two times then we get for every $x\in(0,\xi_{n})$ that
$$
z_{n}^{\pm '}(x)=\left(\frac{i\lambda_{n}}{\sqrt{1+i\lambda_{n}a(x)}}\right)'\int_{x}^{\xi_{n}}\!\!\!\!\int_{t}^{\xi_{n}}v_{2,n}(s)\,\ud s\,\ud t-\frac{i\lambda_{n}}{\sqrt{1+i\lambda_{n}a(x)}}\int_{x}^{\xi_{n}}v_{2,n}(s)\,\ud s\pm v_{2,n}'(x)
$$
and 
\begin{align}\label{PSBKV16}
z_{n}^{\pm ''}(x)=\left(\frac{i\lambda_{n}}{\sqrt{1+i\lambda_{n}a(x)}}\right)''\int_{x}^{\xi_{n}}\!\!\!\!\int_{t}^{\xi_{n}}v_{2,n}(s)\,\ud s\,\ud t-2\left(\frac{i\lambda_{n}}{\sqrt{1+i\lambda_{n}a(x)}}\right)'\int_{x}^{\xi_{n}}v_{2,n}(s)\,\ud s\nonumber
\\
+\frac{i\lambda_{n}}{\sqrt{1+i\lambda_{n}a(x)}}v_{2,n}(x)\pm v_{2,n}''(x)\nonumber
\\
=\left(\frac{i\lambda_{n}}{\sqrt{1+i\lambda_{n}a(x)}}\right)''\int_{x}^{\xi_{n}}\!\!\!\!\int_{t}^{\xi_{n}}v_{2,n}(s)\,\ud s\,\ud t-2\left(\frac{i\lambda_{n}}{\sqrt{1+i\lambda_{n}a(x)}}\right)'\int_{x}^{\xi_{n}}v_{2,n}(s)\,\ud s
\\
\pm\frac{i\lambda_{n}}{\sqrt{1+i\lambda_{n}a(x)}}z_{n}^{\pm}(x)\mp\frac{\lambda_{n}^{2}}{1+i\lambda_{n}a(x)}\int_{x}^{\xi_{n}}\!\!\!\!\int_{t}^{\xi_{n}}v_{2,n}(s)\,\ud s\,\ud t\pm v_{2,n}''(x).\nonumber
\end{align}
From \eqref{PSBKV18} and \eqref{PSBKV8} we have
\begin{equation}\label{PSBKV23}
v_{2,n}''(x)=i\lambda_{n} u_{2,n}''(x)-\lambda_{n}^{-\gamma}f_{2,n}''(x)=\frac{i\lambda_{n}}{1+i\lambda_{n}a(x)}M_{n}(x)-\frac{\lambda_{n}^{-\gamma}}{1+i\lambda_{n}a(x)}f_{2,n}''(x)\quad\forall\,x\in(0,\xi_{n}).
\end{equation}
By integrating two times \eqref{PSBKV7} we obtain
\begin{align}\label{PSBKV24}
\frac{\lambda_{n}^{2}}{1+i\lambda_{n}a(x)}\int_{x}^{\xi_{n}}\!\!\!\!\int_{t}^{\xi_{n}}v_{2,n}(s)\,\ud s\,\ud t=\frac{i\lambda_{n}}{1+i\lambda_{n}a(x)}\left(M_{n}(x)-M_{n}(\xi_{n})\right)+\frac{i\lambda_{n}}{1+i\lambda_{n}a(x)}(\xi_{n}-x)M_{n}'(\xi_{n})\nonumber
\\
-\frac{i\lambda_{n}^{1-\gamma}}{1+i\lambda_{n}a(x)}\int_{x}^{\xi_{n}}\!\!\!\!\int_{t}^{\xi_{n}}g_{2,n}(s)\,\ud s\,\ud t\quad\forall\,x\in(0,\xi_{n}).
\end{align}
Inserting \eqref{PSBKV23} and \eqref{PSBKV24} into \eqref{PSBKV16} we get
\begin{align}\label{PSBKV25}
z_{n}^{\pm ''}(x)=\left(\frac{i\lambda_{n}}{\sqrt{1+i\lambda_{n}a(x)}}\right)''\int_{x}^{\xi_{n}}\!\!\!\!\int_{t}^{\xi_{n}}v_{2,n}(s)\,\ud s\,\ud t-2\left(\frac{i\lambda_{n}}{\sqrt{1+i\lambda_{n}a(x)}}\right)'\int_{x}^{\xi_{n}}v_{2,n}(s)\,\ud s\nonumber
\\
\pm\frac{i\lambda_{n}}{1+i\lambda_{n}a(x)}z_{n}^{\pm}(x)\pm\frac{i\lambda_{n}}{1+i\lambda_{n}a(x)}M_{n}(\xi_{n})\mp\frac{i\lambda_{n}}{1+i\lambda_{n}a(x)}(\xi_{n}-x)M_{n}'(\xi_{n})\nonumber
\\
\pm\frac{i\lambda_{n}^{1-\gamma}}{1+i\lambda_{n}a(x)}\int_{x}^{\xi_{n}}\!\!\!\!\int_{t}^{\xi_{n}}g_{2,n}(s)\,\ud s\,\ud t\mp\frac{\lambda_{n}^{-\gamma}}{1+i\lambda_{n}a(x)}f_{2,n}''(x)\quad\forall\,x\in(0,\xi_{n}).
\end{align}
At this stage we would like to solve the previous second order ordinary differential equations on $z_{n}^{\pm}$ for this aim we define a sequence of functions as follow
\begin{align}\label{PSBKV47}
z_{n,0}^{\pm}(x)=\pm v_{2,n}(\xi_{n})\pm v_{2,n}'(\xi_{n})(x-\xi_{n})+\int_{x}^{\xi_{n}}\!\!\!\!\int_{\tau}^{\xi_{n}}\left(\frac{i\lambda_{n}}{\sqrt{1+i\lambda_{n}a(\rho)}}\right)''\int_{\rho}^{\xi_{n}}\!\!\!\!\int_{t}^{\xi_{n}}v_{2,n}(s)\,\ud s\,\ud t\,\ud \rho\,\ud\,\tau\nonumber
\\
-2\int_{x}^{\xi_{n}}\!\!\!\!\int_{\tau}^{\xi_{n}}\left(\frac{i\lambda_{n}}{\sqrt{1+i\lambda_{n}a(\rho)}}\right)'\int_{\rho}^{\xi_{n}}v_{2,n}(s)\,\ud s\,\ud\rho\,\ud\tau\mp M_{n}(\xi_{n})\int_{x}^{\xi_{n}}\!\!\!\!\int_{t}^{\xi_{n}}\frac{i\lambda_{n}}{1+i\lambda_{n}a(s)}\,\ud s\,\ud t\nonumber
\\
\pm M_{n}'(\xi_{n})\int_{x}^{\xi_{n}}\!\!\!\!\int_{t}^{\xi_{n}}\frac{i\lambda_{n}}{1+i\lambda_{n}a(s)}(\xi_{n}-s)\,\ud s\,\ud t\pm\int_{x}^{\xi_{n}}\!\!\!\!\int_{\tau}^{\xi_{n}}\frac{\lambda_{n}^{1-\gamma}}{1+i\lambda_{n}a(s)}\int_{\rho}^{\xi_{n}}\!\!\!\!\int_{t}^{\xi_{n}}g_{2,n}(s)\,\ud s\,\ud t\,\ud \rho\,\ud \tau\nonumber
\\
\mp\int_{x}^{\xi_{n}}\!\!\!\!\int_{t}^{\xi_{n}}\frac{i\lambda_{n}^{-\gamma}}{1+i\lambda_{n}a(s)}f_{2,n}''(s)\,\ud s\, \ud t\quad\forall\,x\in(0,\xi_{n}),
\end{align}
and for every $m\geq 0$
\begin{equation}\label{PSBKV48}
z_{n,m+1}^{\pm}(x)=\pm\int_{x}^{\xi_{n}}\!\!\!\int_{t}^{\xi_{n}}\frac{i\lambda_{n}}{1+i\lambda_{n}a(s)}z_{n,m}^{\pm}(s)\,\ud s\,\ud t\quad\forall\,x\in(0,\xi_{n}).
\end{equation}

Let $\delta>0$ that will be specified later on. Since $v_{2,n}'(1)=0$ then following to Lemma \ref{PSBKV27}, Lemma \ref{PSBKV106} and \eqref{PSBKV12} we have
\begin{align}\label{PSBKV29}
\|v_{2,n}'\|_{L^{2}\left(\frac{\lambda_{n}^{-\delta}}{2},\lambda_{n}^{-\delta}\right)}&\leq\|x^{-\frac{\beta_{0}}{2}}x^{\frac{\beta_{0}}{2}}v_{2,n}'\|_{L^{2}\left(\frac{\lambda_{n}^{-\delta}}{2},\lambda_{n}^{-\delta}\right)}\nonumber
\\
&\leq C\lambda_{n}^{\frac{\delta\beta_{0}}{2}}\|x^{\frac{\beta_{0}}{2}}v_{2,n}'\|_{L^{2}(0,1)}\nonumber
\\
&\leq C\lambda_{n}^{\frac{\delta\beta_{0}}{2}}\|x^{\frac{\alpha}{2}}v_{2,n}''\|_{L^{2}(0,1)}\nonumber
\\
&=o\left(\lambda_{n}^{\frac{\delta\beta_{0}}{2}-\frac{\gamma}{2}}\right)
\end{align}
where $\beta_{0}$ is satisfying
\begin{equation}\label{PSBKV31}
\beta_{0}\geq \alpha-2 \text{ if }\alpha>1,\quad \beta_{0}=0\text{ if }\alpha=1\quad\text{ and }\quad\beta_{0}>-1 \text{ if } 0\leq \alpha<1.
\end{equation}
Since in addition we have $v_{2,n}(1)=0$ then by Lemma \ref{PSBKV27}, Lemma \ref{PSBKV106} and \eqref{PSBKV11} we obtain
\begin{align}\label{PSBKV32}
\|v_{2,n}\|_{L^{2}\left(\frac{\lambda_{n}^{-\delta}}{2},\lambda_{n}^{-\delta}\right)}&\leq\|x^{-\frac{\beta'}{2}}x^{\frac{\beta'}{2}}v_{2,n}\|_{L^{2}	\left(\frac{\lambda_{n}^{-\delta}}{2},\lambda_{n}^{-\delta}\right)}\nonumber
\\
&\leq\lambda_{n}^{\frac{\delta\beta'}{2}}\|x^{\frac{\beta'}{2}}v_{2,n}\|_{L^{2}(0,1)}\nonumber
\\
&\leq C\lambda_{n}^{\frac{\delta\beta'}{2}}\|x^{\frac{\beta''}{2}}v_{2,n}'\|_{L^{2}(0,1)}\nonumber
\\
&\leq C\lambda_{n}^{\frac{\delta\beta'}{2}}\|x^{\frac{\alpha}{2}}v_{2,n}''\|_{L^{2}(0,1)}\nonumber
\\
&=o\left(\lambda_{n}^{\frac{\delta\beta'}{2}-\frac{\gamma}{2}}\right)
\end{align}
where $\beta'$, $\beta''$ and $\alpha$ are satisfying the following assumptions
\begin{align}
\beta'\geq \beta''-2 \text{ if }\beta''>1, \quad\beta'=0\text{ if }\beta''=1\quad\text{ and }\quad\beta'>-1 \text{ if } 0\leq \beta''<1\label{PSBKV33}
\\
\beta''\geq \alpha-2 \text{ if }1<\alpha<5, \quad \beta''=0\text{ if }\alpha=1 \quad\text{ and }\quad\beta''>-1 \text{ if } 0\leq \alpha<1.\label{PSBKV30}
\end{align}
Then following to \eqref{PSBKV32} we have 
\begin{equation}\label{PSBKV34}
\min_{x\in\left[\frac{\lambda_{n}^{-\delta}}{2},\lambda_{n}^{-\delta}\right]}|v_{2,n}(x)|\leq \sqrt{2}\lambda_{n}^{\frac{\delta}{2}}\|v_{2,n}\|_{L^{2}\left(\frac{\lambda_{n}^{-\delta}}{2},\lambda_{n}^{-\delta}\right)}=o\left(\lambda_{n}^{\frac{\delta}{2}(1+\beta')-\frac{\gamma}{2}}\right)
\end{equation}
and following to \eqref{PSBKV29} we have 
\begin{equation}\label{PSBKV35}
\min_{x\in\left[\frac{\lambda_{n}^{-\delta}}{2},\lambda_{n}^{-\delta}\right]}|v_{2,n}'(x)|\leq \sqrt{2}\lambda_{n}^{\frac{\delta}{2}}\|v_{2,n}'\|_{L^{2}\left(\frac{\lambda_{n}^{-\delta}}{2},\lambda_{n}^{-\delta}\right)}=o\left(\lambda_{n}^{\frac{\delta}{2}(1+\beta_{0})-\frac{\gamma}{2}}\right).
\end{equation}
From \eqref{PSBKV11} and \eqref{PSBKV12} we have
\begin{align}\label{PSBKV38}
\|M_{n}\|_{L^{2}\left(\frac{\lambda_{n}^{-\delta}}{2},\lambda_{n}^{-\delta}\right)}&\leq\|u_{2,n}''\|_{L^{2}\left(\frac{\lambda_{n}^{-\delta}}{2},\lambda_{n}^{-\delta}\right)}+\|av_{2,n}''\|_{L^{2}\left(\frac{\lambda_{n}^{-\delta}}{2},\lambda_{n}^{-\delta}\right)}\nonumber
\\
&\leq\|a^{-\frac{1}{2}}a^{\frac{1}{2}}u_{2,n}''\|_{L^{2}\left(\frac{\lambda_{n}^{-\delta}}{2},\lambda_{n}^{-\delta}\right)}+\max_{x\in\left[\frac{\lambda_{n}^{-\delta}}{2},\lambda_{n}^{-\delta}\right]}\left\{a^{\frac{1}{2}}(x)\right\}\|a^{\frac{1}{2}}v_{2,n}''\|_{L^{2}\left(\frac{\lambda_{n}^{-\delta}}{2},\lambda_{n}^{-\delta}\right)}\nonumber
\\
&\leq C\left(\max_{x\in\left[\frac{\lambda_{n}^{-\delta}}{2},\lambda_{n}^{-\delta}\right]}\left\{a^{-\frac{1}{2}}(x)\right\}\|a^{\frac{1}{2}}u_{2,n}''\|_{L^{2}\left(\frac{\lambda_{n}^{-\delta}}{2},\lambda_{n}^{-\delta}\right)}+\lambda_{n}^{\frac{-\delta\alpha}{2}}\|a^{\frac{1}{2}}v_{2,n}''\|_{L^{2}\left(\frac{\lambda_{n}^{-\delta}}{2},\lambda_{n}^{-\delta}\right)}\right)\nonumber
\\
&=\left(\lambda_{n}^{\frac{\delta\alpha}{2}-\frac{\gamma}{2}-1}+\lambda_{n}^{-\left(\frac{\delta\alpha}{2}+\frac{\gamma}{2}\right)}\right)o(1).
\end{align}
Then it follows that
\begin{equation}\label{PSBKV36}
\min_{x\in\left[\frac{\lambda_{n}^{-\delta}}{2},\lambda_{n}^{-\delta}\right]}|M_{n}(x)|\leq\sqrt{2}\lambda_{n}^{\frac{\delta}{2}}\|M_{n}\|_{L^{2}\left(\frac{\lambda_{n}^{-\delta}}{2},\lambda_{n}^{-\delta}\right)}=\left(\lambda_{n}^{\frac{\delta}{2}(\alpha+1)-\frac{\gamma}{2}-1}+\lambda_{n}^{\frac{\delta}{2}(1-\alpha)-\frac{\gamma}{2}}\right)o(1).
\end{equation}
From \eqref{PSBKV7} and \eqref{PSBKV32}, one gets
\begin{equation}\label{PSBKV37}
\|M_{n}''\|_{L^{2}\left(\frac{\lambda_{n}^{-\delta}}{2},\lambda_{n}^{-\delta}\right)}\leq\lambda_{n}\|v_{2,n}\|_{L^{2}\left(\frac{\lambda_{n}^{-\delta}}{2},\lambda_{n}^{-\delta}\right)}+\lambda_{n}^{-\gamma}\|g_{2,n}\|_{L^{2}\left(\frac{\lambda_{n}^{-\delta}}{2},\lambda_{n}^{-\delta}\right)}=\left(\lambda_{n}^{\frac{\delta\beta'}{2}-\frac{\gamma}{2}+1}+\lambda_{n}^{-\gamma}\right)o(1).
\end{equation}
Following Lemma \ref{PSBKV112}, \eqref{PSBKV38} and \eqref{PSBKV37}, we find
\begin{align}\label{PSBKV39}
\min_{x\in\left[\frac{\lambda_{n}^{-\delta}}{2},\lambda_{n}^{-\delta}\right]}|M_{n}'(x)|&\leq C\lambda_{n}^{\frac{\delta}{2}}\|M_{n}'\|_{L^{2}\left(\frac{\lambda_{n}^{-\delta}}{2},\lambda_{n}^{-\delta}\right)}\nonumber
\\
&\leq C\lambda_{n}^{\frac{\delta}{2}}\left(\lambda_{n}^{\delta}\|M_{n}\|_{L^{2}\left(\frac{\lambda_{n}^{-\delta}}{2},\lambda_{n}^{-\delta}\right)}+\lambda_{n}^{-\delta}\|M_{n}''\|_{L^{2}\left(\frac{\lambda_{n}^{-\delta}}{2},\lambda_{n}^{-\delta}\right)}\right)\nonumber
\\
&=\left(\lambda_{n}^{\frac{\delta}{2}\left(\alpha+3\right)-\frac{\gamma}{2}-1}+\lambda_{n}^{\frac{\delta}{2}\left(3-\alpha\right)-\frac{\gamma}{2}}+\lambda_{n}^{\frac{\delta}{2}(\beta'-1)-\frac{\gamma}{2}+1}+\lambda_{n}^{-(\gamma+\frac{\delta}{2})}\right)o(1).
\end{align}
We set 
\begin{align*}
p_{0}=\max\bigg\{&\frac{\delta}{2}(1+\beta')-\frac{\gamma}{2}, \,\frac{\delta}{2}(1+\beta_{0})-\frac{\gamma}{2}, \,\frac{\delta}{2}(\alpha+1)-\frac{\gamma}{2}-1, \,\frac{\delta}{2}(1-\alpha)-\frac{\gamma}{2},
\\
&\frac{\delta}{2}\left(\alpha+3\right)-\frac{\gamma}{2}-1,\, \frac{\delta}{2}\left(3-\alpha\right)-\frac{\gamma}{2},\, \frac{\delta}{2}(\beta'-1)-\frac{\gamma}{2}+1,\, -\left(\gamma+\frac{\delta}{2}\right)\bigg\}
\end{align*}
where $\beta'>-1$ and $\beta_{0}>-1$ (are small enough satisfying \eqref{PSBKV31}, \eqref{PSBKV33} and \eqref{PSBKV30})  and $\delta>0$ are suitably chosen in order to guarantee that 
\begin{equation}\label{PSBKV86}
p_{0}\leq 0.
\end{equation}
Henceforth we suppose that \eqref{PSBKV86} holds true.

Following to \eqref{PSBKV34}, \eqref{PSBKV35}, \eqref{PSBKV36} and \eqref{PSBKV39} we are able to choose $\xi_{n}\in\left[\frac{\lambda_{n}^{-\delta}}{2},\lambda_{n}^{-\delta}\right]$ such that
\begin{equation}\label{PSBKV73}
|v_{2,n}(\xi_{n})|+|v_{2,n}'(\xi_{n})|+|M_{n}(\xi_{n})|+|M_{n}'(\xi_{n})|=o\left(\lambda_{n}^{p_{0}}\right).
\end{equation}
Due to \eqref{PSBKV73} one has
\begin{equation}\label{PSBKV45}
|v_{2,n}'(\xi_{n})|.|x-\xi_{n}|=o\left(\lambda_{n}^{p_{0}-\delta}\right).
\end{equation}
Noting that
\begin{equation}\label{PSBKV75}
\ds\max_{\ds x\in\left[0,\lambda_{n}^{-\delta}\right]}\left\{\frac{1}{(1+\lambda_{n}^{2}a^{2}(x))^{\frac{1}{4}}}\right\}=O(1)
\end{equation}
and when $1>\alpha\delta$ we have $\ds 0<\delta<\frac{1+2\delta}{2+\alpha}<\frac{1+\delta}{1+\alpha}<\frac{2+\delta}{1+2\alpha}<\frac{1}{\alpha}$ and following to the assumption \eqref{IBKV4} we find
\begin{equation}\label{PSBKV40}
\ds\max_{\ds x\in\left[\lambda_{n}^{-\frac{1+2\delta}{2+\alpha}},\lambda_{n}^{-\delta}\right]}\left\{\frac{1}{(1+\lambda_{n}^{2}a^{2}(x))^{\frac{1}{4}}}\right\}=O\left(\lambda_{n}^{-\frac{1-\alpha\delta}{2+\alpha}}\right)\qquad\text{if }1>\alpha\delta
\end{equation}
\begin{equation}\label{PSBKV76}
\ds\max_{\ds x\in\left[\lambda_{n}^{-\frac{1+\delta}{1+\alpha}},\lambda_{n}^{-\delta}\right]}\left\{\frac{1}{(1+\lambda_{n}^{2}a^{2}(x))^{\frac{1}{2}}}\right\}=O\left(\lambda_{n}^{-\frac{1-\alpha\delta}{1+\alpha}}\right)\qquad\text{if }1>\alpha\delta
\end{equation}
and
\begin{equation}\label{PSBKV79}
\ds\max_{\ds x\in\left[\lambda_{n}^{-\frac{2+\delta}{1+2\alpha}},\lambda_{n}^{-\delta}\right]}\left\{\frac{1}{1+\lambda_{n}^{2}a^{2}(x)}\right\}=O\left(\lambda_{n}^{-\frac{2(1-\alpha\delta)}{1+2\alpha}}\right)\qquad\text{if }1>\alpha\delta.
\end{equation}
We derive from \eqref{PSBKV75}, \eqref{PSBKV40}, \eqref{PSBKV76} and \eqref{PSBKV79} that
\begin{align}\label{PSBKV77}
\int_{0}^{\xi_{n}}\frac{\ud s}{(1+\lambda_{n}^{2}a^{2}(s))^{\frac{1}{4}}}&\leq\int_{0}^{\lambda_{n}^{-\frac{1+2\delta}{2+\alpha}}}\frac{\ud s}{(1+\lambda_{n}^{2}a^{2}(s))^{\frac{1}{4}}}+\int_{\lambda_{n}^{-\frac{1+2\delta}{2+\alpha}}}^{\lambda_{n}^{-\delta}}\frac{\ud s}{(1+\lambda_{n}^{2}a^{2}(s))^{\frac{1}{4}}}\nonumber
\\
&=\left(\lambda_{n}^{-\frac{1+2\delta}{2+\alpha}}+\lambda_{n}^{-\frac{1-\alpha\delta}{2+\alpha}-\delta}\right)O(1)\nonumber
\\
&=O\left(\lambda_{n}^{-\frac{1+2\delta}{2+\alpha}}\right)\qquad \text{if }1>\alpha\delta,
\end{align}
\begin{align}\label{PSBKV78}
\int_{0}^{\xi_{n}}\frac{s^{-\beta}}{(1+\lambda_{n}^{2}a^{2}(s))^{\frac{1}{2}}}\ud s&\leq\int_{0}^{\lambda_{n}^{-\frac{1+\delta}{1+\alpha}}}\frac{s^{-\beta}}{(1+\lambda_{n}^{2}a^{2}(s))^{\frac{1}{2}}}\ud s+\int_{\lambda_{n}^{-\frac{1+\delta}{1+\alpha}}}^{\lambda_{n}^{-\delta}}\frac{s^{-\beta}}{(1+\lambda_{n}^{2}a^{2}(s))^{\frac{1}{2}}}\ud s\nonumber
\\
&=\left(\int_{0}^{\lambda_{n}^{-\frac{1+\delta}{1+\alpha}}}s^{-\beta}\,\ud s+\lambda_{n}^{-\frac{1-\alpha\delta}{1+\alpha}}\int_{\lambda_{n}^{-\frac{1+\delta}{1+\alpha}}}^{\lambda_{n}^{-\delta}}s^{-\beta}\,\ud s\right)O(1)\nonumber
\\
&=\left(\lambda_{n}^{-\frac{1+\delta}{1+\alpha}(1-\beta)}+\lambda_{n}^{-\frac{1-\alpha\delta}{1+\alpha}-\delta(1-\beta)}\right)O(1)\nonumber
\\
&=O\left(\lambda_{n}^{-\frac{1-\alpha\delta}{1+\alpha}-\delta(1-\beta)}\right)\qquad \text{if }1>\alpha\delta.
\end{align}
with $-1<\beta<1$ and
\begin{align}\label{PSBKV80}
\int_{0}^{\xi_{n}}\frac{\ud s}{1+\lambda_{n}^{2}a^{2}(s)}&\leq\int_{0}^{\lambda_{n}^{-\frac{2+\delta}{1+2\alpha}}}\frac{\ud s}{1+\lambda_{n}^{2}a^{2}(s)}+\int_{\lambda_{n}^{-\frac{2+\delta}{1+2\alpha}}}^{\lambda_{n}^{-\delta}}\frac{\ud s}{1+\lambda_{n}^{2}a^{2}(s)}\nonumber
\\
&=\left(\lambda_{n}^{-\frac{2+\delta}{1+2\alpha}}+\lambda_{n}^{-\frac{2(1-\alpha\delta)}{1+2\alpha}-\delta}\right)O(1)\nonumber
\\
&=O\left(\lambda_{n}^{-\frac{2+\delta}{1+2\alpha}}\right)\qquad \text{if }1>\alpha\delta.
\end{align}
By the Cauchy-Schwarz inequality, \eqref{PSBKV7}, \eqref{PSBKV75} and \eqref{PSBKV78} we obtain
\begin{align}\label{PSBKV41}
&\left|\int_{x}^{\xi_{n}}\!\!\!\!\int_{\tau}^{\xi_{n}}\frac{\lambda_{n}^{1-\gamma}}{1+i\lambda_{n}a(\rho)}\int_{\rho}^{\xi_{n}}\!\!\!\!\int_{t}^{\xi_{n}}g_{2,n}(s)\,\ud s\,\ud t\,\ud \rho\,\ud \tau\right|\nonumber
\\
&\leq\|g_{2,n}\|_{L^{2}(0,1)}\int_{x}^{\xi_{n}}\!\!\!\!\int_{\tau}^{\xi_{n}}\frac{\lambda_{n}^{1-\gamma}}{(1+\lambda_{n}^{2}a^{2}(\rho))^{\frac{1}{2}}}\int_{\rho}^{\xi_{n}}(\xi_{n}-t)^{\frac{1}{2}}\,\ud t\,\ud \rho\,\ud \tau\nonumber
\\
&=\left\{\begin{array}{ll}
o\left(\lambda_{n}^{1-\frac{7}{2}\delta-\gamma}\right)&\text{if }1\leq \alpha\delta
\\
o\left(\lambda_{n}^{1-\frac{1+\delta}{1+\alpha}-\frac{5}{2}\delta-\gamma}\right)&\text{if }1>\alpha\delta.
\end{array}\right.
\end{align}
Similarly, by Cauchy-Schwarz inequality, \eqref{PSBKV5}, \eqref{PSBKV75} and \eqref{PSBKV80} we obtain
\begin{align}\label{PSBKV42}
\left|\int_{x}^{\xi_{n}}\!\!\!\!\int_{t}^{\xi_{n}}\frac{\lambda_{n}^{-\gamma}}{1+i\lambda_{n}a(s)}f_{2,n}''(s)\,\ud s\, \ud t\right|&\leq\lambda_{n}^{-\gamma}\|f_{2,n}''\|_{L^{2}(0,1)}\int_{x}^{\xi_{n}}\!\!\!\left(\int_{t}^{\xi_{n}}\frac{\ud s}{1+\lambda_{n}^{2}a^{2}(s)}\right)^{\frac{1}{2}}\ud t\nonumber
\\
&=\left\{\begin{array}{ll}
o\left(\lambda_{n}^{-\frac{3}{2}\delta-\gamma}\right)&\text{if }1\leq \alpha\delta
\\
o\left(\lambda_{n}^{-\frac{2+\delta}{2(1+2\alpha)}-\delta-\gamma}\right)&\text{if }1>\alpha\delta.
\end{array}\right.
\end{align}
From \eqref{PSBKV73}, \eqref{PSBKV75} and \eqref{PSBKV78}, we have
\begin{align}\label{PSBKV43}
\left|M_{n}'(\xi_{n})\int_{x}^{\xi_{n}}\!\!\!\!\int_{t}^{\xi_{n}}\frac{i\lambda_{n}}{1+i\lambda_{n}a(s)}(\xi_{n}-s)\,\ud s\,\ud t\right|&=\lambda_{n}^{1-\delta}|M_{n}'(\xi_{n})|\int_{x}^{\xi_{n}}\!\!\!\!\int_{t}^{\xi_{n}}\frac{1}{(1+\lambda_{n}^{2}a^{2}(s))^{\frac{1}{2}}}\,\ud s\,\ud t\nonumber
\\
&=|M_{n}'(\xi_{n})|\times\left\{\begin{array}{ll}
O\left(\lambda_{n}^{1-3\delta}\right)&\text{if }1\leq \alpha\delta
\\
o\left(\lambda_{n}^{1-\frac{1+\delta}{1+\alpha}-2\delta}\right)&\text{if }1>\alpha\delta
\end{array}\right.\nonumber
\\
&=\left\{\begin{array}{ll}
o\left(\lambda_{n}^{1-3\delta+p_{0}}\right)&\text{if }1\leq \alpha\delta
\\
o\left(\lambda_{n}^{1-\frac{1+\delta}{1+\alpha}-2\delta+p_{0}}\right)&\text{if }1>\alpha\delta.
\end{array}\right.
\end{align}
Using the same arguments as in \eqref{PSBKV43} we can prove also that
\begin{equation}\label{PSBKV44}
\left|M_{n}(\xi_{n})\int_{x}^{\xi_{n}}\!\!\!\!\int_{t}^{\xi_{n}}\frac{i\lambda_{n}}{1+i\lambda_{n}a(s)}\,\ud s\,\ud t\right|=\left\{\begin{array}{ll}
o\left(\lambda_{n}^{1-2\delta+p_{0}}\right)&\text{if }1\leq \alpha\delta
\\
o\left(\lambda_{n}^{1-\frac{1+\delta}{1+\alpha}-\delta+p_{0}}\right)&\text{if }1>\alpha\delta.
\end{array}\right.
\end{equation}
By the Cauchy-Schwarz inequality, Lemma \ref{PSBKV27}, Lemma \ref{PSBKV106} and \eqref{PSBKV11} we obtain
\begin{align}\label{PSBKV81}
\left|\int_{0}^{\xi_{n}}v_{2,n}(s)\,\ud s\right|&\leq\left(\int_{0}^{\xi_{n}}s^{-\beta}\,\ud s\right)^{\frac{1}{2}}\left(\int_{0}^{1}x^{\beta}|v_{2,n}(s)|^{2}\,\ud s\right)^{\frac{1}{2}}\nonumber
\\
&\leq C\xi_{n}^{\frac{1-\beta}{2}}\|x^{\frac{\beta}{2}}v_{2,n}\|_{L^{2}(0,1)}\nonumber
\\
&\leq C\lambda_{n}^{-\frac{1-\beta}{2}\delta}\|x^{\frac{\overline{\beta}}{2}}v_{2,n}'\|_{L^{2}(0,1)}\nonumber
\\
&\leq C\lambda_{n}^{-\frac{1-\beta}{2}\delta}\|x^{\frac{\alpha}{2}}v_{2,n}''\|_{L^{2}(0,1)}\nonumber
\\
&=o\left(\lambda_{n}^{-\frac{1-\beta}{2}\delta-\frac{\gamma}{2}}\right)
\end{align}
where $-1<\beta<1$ and $\overline{\beta}>-1$ are satisfying
\begin{align}
\beta\geq \overline{\beta}-2 \text{ if }\overline{\beta}>1, \quad\beta=0\text{ if }\overline{\beta}=1\quad\text{ and }\quad\beta>-1 \text{ if } 0\leq \overline{\beta}<1\label{PSBKV108}
\\
\overline{\beta}\geq \alpha-2 \text{ if }1<\alpha<5, \quad \overline{\beta}=0\text{ if }\alpha=1 \quad\text{ and }\quad\overline{\beta}>-1 \text{ if } 0\leq \alpha<1.\label{PSBKV109}
\end{align}
Due to \eqref{PSBKV75}, \eqref{PSBKV77} and \eqref{PSBKV81} one gets
\begin{align}\label{PSBKV82}
\left|\int_{x}^{\xi_{n}}\frac{i\lambda_{n}}{\sqrt{1+i\lambda_{n}a(\tau)}}\int_{\tau}^{\xi_{n}}v_{2,n}(s)\,\ud s\,\ud\tau\right|&\leq o\left(\lambda_{n}^{1-\frac{1-\beta}{2}\delta-\frac{\gamma}{2}}\right)\int_{0}^{\xi_{n}}\frac{\ud \tau}{\left(1+\lambda_{n}^{2}a^{2}(\tau)\right)^{\frac{1}{4}}}\nonumber
\\
&=\left\{\begin{array}{ll}
o\left(\lambda_{n}^{1-\frac{3-\beta}{2}\delta-\frac{\gamma}{2}}\right)&\text{if } 1\leq\alpha\delta
\\
o\left(\lambda_{n}^{1-\frac{1-\beta}{2}\delta-\frac{1+2\delta}{2+\alpha}-\frac{\gamma}{2}}\right)&\text{if } 1>\alpha\delta.
\end{array}\right.
\end{align}
We use the same arguments as in \eqref{PSBKV81} then by \eqref{PSBKV75} and \eqref{PSBKV78} we obtain
\begin{align}\label{PSBKV83}
\left|\int_{x}^{\xi_{n}}\!\!\!\!\int_{\tau}^{\xi_{n}}\frac{i\lambda_{n}}{\sqrt{1+i\lambda_{n}a(\rho)}}v_{2,n}(\rho)\,\ud\rho\,\ud\tau\right|&\leq\lambda_{n}\int_{x}^{\xi_{n}}\!\!\!\!\int_{\tau}^{\xi_{n}}\frac{\rho^{-\frac{\beta'}{2}}}{(1+\lambda_{n}^{2}a^{2}(\rho))^{\frac{1}{4}}}\rho^{\frac{\beta}{2}}v_{2,n}(\rho)\,\ud\rho\,\ud\tau\nonumber
\\
&\leq\lambda_{n}^{1-\delta}\left(\int_{0}^{\xi_{n}}\frac{\rho^{-\beta}}{(1+\lambda_{n}^{2}a^{2}(\rho))^{\frac{1}{2}}}\,\ud\rho\right)^{\frac{1}{2}}\left(\int_{0}^{1}\rho^{\beta}|v_{2,n}(\rho)|^{2}\,\ud \rho\right)^{\frac{1}{2}}\nonumber
\\
&=\|s^{\frac{\alpha}{2}}v_{2,n}''\|_{L^{2}(0,1)}\left\{\begin{array}{ll}
O\left(\lambda_{n}^{1-\frac{3-\beta}{2}\delta}\right)&\text{if }1\leq \alpha\delta
\\
O\left(\lambda_{n}^{1-\frac{1-\alpha\delta}{2(1+\alpha)}-\frac{3-\beta}{2}\delta}\right)&\text{if }1>\alpha\delta
\end{array}\right.\nonumber
\\
&=\left\{\begin{array}{ll}
o\left(\lambda_{n}^{1-\frac{3-\beta}{2}\delta-\frac{\gamma}{2}}\right)&\text{if }1\leq \alpha\delta
\\
o\left(\lambda_{n}^{1-\frac{1-\alpha\delta}{2(1+\alpha)}-\frac{3-\beta}{2}\delta-\frac{\gamma}{2}}\right)&\text{if }1>\alpha\delta
\end{array}\right.
\end{align}
where $-1<\beta<1$ satisfying \eqref{PSBKV108} and \eqref{PSBKV109}.
\\
From \eqref{PSBKV75} and \eqref{PSBKV81} we have
\begin{equation}\label{PSBKV84}
\left|\frac{i\lambda}{\sqrt{1+i\lambda_{n}a(x)}}\int_{x}^{\xi_{n}}\!\!\!\!\int_{t}^{\xi_{n}}v_{2,n}(s)\,\ud s\,\ud t\right|=o\left(\lambda_{n}^{1-\frac{3-\beta}{2}\delta-\frac{\gamma}{2}}\right).
\end{equation}
Performing an integration by parts and uses \eqref{PSBKV82} and \eqref{PSBKV83} one finds
\begin{align}\label{PSBKV46}
&\left|\int_{x}^{\xi_{n}}\!\!\!\!\int_{\tau}^{\xi_{n}}\left(\frac{i\lambda_{n}}{\sqrt{1+i\lambda_{n}a(\rho)}}\right)'\int_{\rho}^{\xi_{n}}v_{2,n}(s)\,\ud s\,\ud\rho\,\ud\tau\right|\nonumber
\\
&\leq\left|\int_{x}^{\xi_{n}}\frac{i\lambda_{n}}{\sqrt{1+i\lambda_{n}a(\tau)}}\int_{\tau}^{\xi_{n}}v_{2,n}(s)\,\ud s\,\ud\tau\right|+\left|\int_{x}^{\xi_{n}}\!\!\!\!\int_{\tau}^{\xi_{n}}\frac{i\lambda_{n}}{\sqrt{1+i\lambda_{n}a(\rho)}}v_{2,n}(\rho)\,\ud\rho\,\ud\tau\right|\nonumber
\\
&=\left\{\begin{array}{ll}
o\left(\lambda_{n}^{1-\frac{3-\beta}{2}\delta-\frac{\gamma}{2}}\right)&\text{if }1\leq \alpha\delta
\\
o\left(\lambda_{n}^{1-\frac{1-\alpha\delta}{2(1+\alpha)}-\frac{3-\beta}{2}\delta-\frac{\gamma}{2}}\right)&\text{if }1>\alpha\delta
\end{array}\right.
\end{align}
and from \eqref{PSBKV83} we have
\begin{align}\label{PSBKV49}
\left|\int_{x}^{\xi_{n}}\!\!\!\!\int_{\tau}^{\xi_{n}}\left(\frac{i\lambda_{n}}{\sqrt{1+i\lambda_{n}a(\rho)}}\right)''\int_{\rho}^{\xi_{n}}\!\!\!\!\int_{t}^{\xi_{n}}v_{2,n}(s)\,\ud s\,\ud t\,\ud \rho\,\ud\,\tau\right|&\leq \left|\int_{x}^{\xi_{n}}\!\!\!\!\int_{\tau}^{\xi_{n}}\frac{i\lambda_{n}}{\sqrt{1+i\lambda_{n}a(\rho)}}v(\rho)\,\ud\rho\,\ud\tau\right|\nonumber
\\
+2\left|\int_{x}^{\xi_{n}}\frac{i\lambda_{n}}{\sqrt{1+i\lambda_{n}a(\tau)}}\int_{t}^{\xi_{n}}v(s)\,\ud s\,\ud \tau\right|&+\left|\frac{i\lambda_{n}}{\sqrt{1+i\lambda_{n}a(x)}}\int_{x}^{\xi_{n}}\!\!\!\!\int_{t}^{\xi_{n}}v(s)\,\ud s\,\ud t\right|\nonumber
\\
&=o\left(\lambda_{n}^{1-\frac{3-\beta}{2}\delta-\frac{\gamma}{2}}\right).
\end{align}
Now we set
$$
\ds p=\max\left\{p_{0},\,1-\frac{7}{2}\delta-\gamma,\,-\frac{3}{2}\delta-\gamma,\,1-\frac{3-\beta}{2}\delta-\frac{\gamma}{2},\,1-2\delta+p_{0}\right\}
$$
if $\alpha\delta\geq1$ and
$$
\ds p=\max\left\{p_{0},\,1-\frac{1+\delta}{1+\alpha}-\frac{5}{2}\delta-\gamma,\,-\frac{2+\delta}{2(1+2\alpha)}-\delta-\gamma,\,1-\frac{1+\delta}{1+\alpha}-2\delta+p_{0},\,1-\frac{3-\beta}{2}\delta-\frac{\gamma}{2}\right\}
$$
if $\alpha\delta<1$ where $-1<\beta<1$ satisfying \eqref{PSBKV108} and \eqref{PSBKV109}.
\\
Then by combining \eqref{PSBKV47} with \eqref{PSBKV73}, \eqref{PSBKV45}, \eqref{PSBKV41}, \eqref{PSBKV42}, \eqref{PSBKV43}, \eqref{PSBKV44}, \eqref{PSBKV46} and \eqref{PSBKV49} we find
\begin{equation}\label{PSBKV50}
\max_{x\in[0,\xi_{n}]}\left|z_{n,0}^{\pm}(x)\right|=o\left(\lambda_{n}^{p}\right).
\end{equation}
We fix $m\in\N$ and we suppose that for any $n\in\N$ we have
\begin{equation}\label{PSBKV52}
\max_{x\in[0,\xi_{n}]}\left|z_{n,m}^{\pm}(x)\right|=\left\{\begin{array}{ll}
o\left(\lambda_{n}^{p+(1-2\delta)m}\right)&\text{if }1\leq\alpha\delta
\\
o\left(\lambda_{n}^{p+\frac{\alpha-\delta(\alpha+2)}{1+\alpha}m}\right)&\text{if }1>\alpha\delta.
\end{array}\right.
\end{equation}
Then following to \eqref{PSBKV48}, \eqref{PSBKV75} and \eqref{PSBKV78} we obtain
\begin{align*}
\max_{x\in[0,\xi_{n}]}\left|z_{n,m+1}^{\pm}(x)\right|&=\max_{x\in[0,\xi_{n}]}\left|\int_{x}^{\xi_{n}}\!\!\!\int_{t}^{\xi_{n}}\frac{i\lambda_{n}}{1+i\lambda_{n}a(s)}z_{n,m}^{\pm}(s)\,\ud s\,\ud t\right|
\\
&=\left\{\begin{array}{ll}
o\left(\lambda_{n}^{p+(1-2\delta)(m+1)}\right)&\text{if }1\leq\alpha\delta
\\
o\left(\lambda_{n}^{p+\frac{\alpha-\delta(\alpha+2)}{1+\alpha}(m+1)}\right)&\text{if }1>\alpha\delta.
\end{array}\right.
\end{align*}
Thus by induction this proves that \eqref{PSBKV52} holds true for all $n,\,m\in\N$. Moreover, providing that \eqref{PSBKV86} and
\begin{equation}\label{PSBKV51}
\frac{\alpha}{\alpha+2}<\delta<\frac{1}{\alpha}\;\text{ if } 0<\alpha<2 \qquad \text{or }\qquad \delta>\frac{1}{2}\;\text{ and } \delta\geq\frac{1}{\alpha}
\end{equation}
hold true  the series defined by $\ds\sum_{m\geq 0}z_{n,m}^{\pm}$ is normally convergent in $[0,\xi_{n}]$.

We set
$$
\tilde{z}_{n}^{\pm}(x)=\sum_{m=0}^{+\infty}z_{n,m}^{\pm}(x).
$$
From \eqref{PSBKV52} and taking into account \eqref{PSBKV51} it follows that
\begin{align}\label{PSBKV53}
\max_{x\in[0,\xi_{n}]}\left|\tilde{z}_{n}^{\pm}(x)\right|&=o\left(\lambda_{n}^{p}\right).\left\{\begin{array}{ll}
\ds\sum_{n=0}^{+\infty}\lambda_{n}^{(1-2\delta)m}&\text{if }1\leq\alpha\delta
\\
\ds\sum_{n=0}^{+\infty}\lambda_{n}^{\frac{\alpha-\delta(\alpha+2)}{1+\alpha}m}&\text{if }1\leq\alpha\delta
\end{array}\right.\nonumber
\\
&=o\left(\lambda_{n}^{p}\right).\left\{\begin{array}{ll}
\ds\frac{1}{1-\lambda_{n}^{(1-2\delta)}}&\text{if }1\leq\alpha\delta
\\
\ds\frac{1}{1-\lambda_{n}^{\frac{\alpha-\delta(\alpha+2)}{1+\alpha}}}&\text{if }1\leq\alpha\delta
\end{array}\right.\nonumber
\\
&=o\left(\lambda_{n}^{p}\right).
\end{align}
Besides, by induction we can prove that $z_{n,m}^{\pm}$ is of class $\mathcal{C}^{2}$ in $[0,\xi_{n}]$ and we have 
\begin{equation*}
\left(z_{n,m+1}^{\pm}\right)'(x)=\mp\int_{x}^{\xi_{n}}\frac{i\lambda_{n}}{1+i\lambda_{n}a(s)}z_{n,m}^{\pm}(s)\,\ud s\quad\forall\,x\in[0,\xi_{n}]\;\forall\,n,\,m\in\N
\end{equation*}
and
\begin{equation*}
\left(z_{n,m+1}^{\pm}\right)''(x)=\pm\frac{i\lambda_{n}}{1+i\lambda_{n}a(x)}z_{n,m}^{\pm}(x)\quad\forall\,x\in[0,\xi_{n}]\;\forall\,n,\,m\in\N.
\end{equation*}
Therefore, following to \eqref{PSBKV75}, \eqref{PSBKV78} and \eqref{PSBKV52} for all $m\geq1$ we have
\begin{equation*}
\max_{x\in[\varepsilon,\xi_{n}]}\left|\left(z_{n,m}^{\pm}\right)'(x)\right|=\left\{\begin{array}{ll}
o\left(\lambda_{n}^{p+1-\delta-\delta+(1-2\delta)(m-1)}\right)&\text{if }1\leq\alpha\delta
\\
o\left(\lambda_{n}^{p+\frac{\alpha-\delta}{1+\alpha}+\frac{\alpha-(\alpha+2)\delta}{1+\alpha}(m-1)}\right)&\text{if }1>\alpha\delta
\end{array}\right.
\end{equation*}
and
\begin{equation*}
\max_{x\in[\varepsilon,\xi_{n}]}\left|\left(z_{n,m}^{\pm}\right)''(x)\right|=\left\{\begin{array}{ll}
o\left(\lambda_{n}^{p+1+(1-2\delta)(m-1)}\right)&\text{if }1\leq\alpha\delta
\\
o\left(\lambda_{n}^{p+1+\frac{\alpha-(\alpha+2)\delta}{1+\alpha}(m-1)}\right)&\text{if }1>\alpha\delta.
\end{array}\right.
\end{equation*}
which by taking into account \eqref{PSBKV51} implies that the series $\ds\sum_{m\geq 1}^{+\infty}\left(z_{n,m}\right)'(x)$ and $\ds\sum_{m\geq 1}^{+\infty}\left(z_{n,m}\right)''(x)$ are normally in $[0,\xi_{n}]$ and consequently $\tilde{z}_{m}^{\pm}$ is  of class $\mathcal{C}^{2}$ in $[0,\xi_{n}]$ and we have
\begin{align*}
\left(\tilde{z}^{\pm}_{n}\right)''(x)&=\sum_{m=0}^{+\infty}\left(z_{n,m}^{\pm}\right)''(x)=\pm\left(z_{n,0}^{\pm}\right)''(x)\pm\frac{i\lambda_{n}}{1+i\lambda_{n}a(x)}\sum_{m=0}^{+\infty}z_{n,m}^{\pm}(x)
\\
&=\left(\frac{i\lambda_{n}}{\sqrt{1+i\lambda_{n}a(x)}}\right)''\int_{x}^{\xi_{n}}\!\!\!\!\int_{t}^{\xi_{n}}v_{2,n}(s)\,\ud s\,\ud t-2\left(\frac{i\lambda_{n}}{\sqrt{1+i\lambda_{n}a(x)}}\right)'\int_{x}^{\xi_{n}}v_{2,n}(s)\,\ud s\nonumber
\\
&\pm\frac{i\lambda_{n}}{1+i\lambda_{n}a(x)}M_{n}(\xi_{n})\pm\frac{i\lambda_{n}}{1+i\lambda_{n}a(x)}(\xi_{n}-x)M_{n}'(\xi_{n})\nonumber
\\
&\pm\frac{\lambda_{n}^{1-\gamma}}{1+i\lambda_{n}a(x)}\int_{x}^{\xi_{n}}\!\!\!\!\int_{t}^{\xi_{n}}g_{2,n}(s)\,\ud s\,\ud t\mp\frac{\lambda_{n}^{-\gamma}}{1+i\lambda_{n}a(x)}f_{2,n}''(x)\pm\frac{i\lambda_{n}}{1+i\lambda_{n}a(x)}.\tilde{z}_{n}^{\pm}(x)
\end{align*}
Consequently, $\tilde{z}^{\pm}_{n}$ is solution of the ordinary differential equation \eqref{PSBKV25}. Hence by uniqueness of the solution we deduce that for every $n\in\N$ $z^{\pm}_{n}=\tilde{z}^{\pm}_{n}$ and by \eqref{PSBKV53}, we have
\begin{equation}\label{PSBKV54}
\max_{x\in[0,\xi_{n}]}\left|z_{n}^{\pm}(x)\right|=o(\lambda_{n}^{p}).
\end{equation}

We set
$$
m_{n}=\max_{x\in[0,\xi_{n}]}\left\{\left|z_{n}^{+}(x)\right|+\left|z_{n}^{-}(x)\right|\right\}.
$$
Since we have
$$
v_{2,n}(x)=\frac{1}{2}\left(z_{n}^{+}(x)-z_{n}^{-}(x)\right)\;\forall\, x\in[0,\xi_{n}]
$$
then from \eqref{PSBKV54} we find
\begin{equation}\label{PSBKV55}
\|v_{2,n}\|_{L^{2}(0,\xi_{n})}=\frac{1}{2}\|\left(z_{n}^{+}(x)-z_{n}^{-}(x)\right)\|_{L^{2}(0,\xi_{n})}\leq m_{n}\xi_{n}^{\frac{1}{2}}=o\left(\lambda_{n}^{p-\frac{\delta}{2}}\right)
\end{equation}
and
\begin{equation}\label{PSBKV56}
|v_{2,n}(0)|=\frac{1}{2}\left|z_{n}^{+}(0)-z_{n}^{-}(0)\right|\leq m_{n}=o\left(\lambda_{n}^{p}\right).
\end{equation}
Integrating \eqref{PSBKV7} over $(0,\xi_{n})$ we obtain
\begin{equation}\label{PSBKV57}
i\lambda_{n}\int_{0}^{\xi_{n}}v_{2,n}(s)\,\ud s+M_{n}'(\xi_{n})-M_{n}'(0)=\lambda_{n}^{-\gamma}\int_{0}^{\xi_{n}}g_{2,n}(s)\,\ud s
\end{equation}
From \eqref{PSBKV81} and \eqref{PSBKV7} we have respectively
\begin{equation}\label{PSBKV58}
\lambda_{n}\left|\int_{0}^{\xi_{n}}v_{2,n}(s)\,\ud s\right|=o\left(\lambda_{n}^{1-\frac{1-\beta}{2}\delta-\frac{\gamma}{2}}\right)
\end{equation}
and
\begin{equation}\label{PSBKV59}
\lambda_{n}^{-\gamma}\left|\int_{0}^{\xi_{n}}g_{2,n}(s)\,\ud s\right|\leq \lambda_{n}^{-\gamma}\sqrt{\xi_{n}}\|g_{2,n}\|_{L^{2}(0,\xi_{n})}=o\left(\lambda_{n}^{-(\gamma+\frac{\delta}{2})}\right)
\end{equation}
then by combining \eqref{PSBKV57} with \eqref{PSBKV73}, \eqref{PSBKV58} and \eqref{PSBKV59} we arrive
\begin{equation}\label{PSBKV60}
|M_{n}'(0)|=\left(\lambda_{n}^{p_{0}}+\lambda_{n}^{1-\frac{1-\beta}{2}\delta-\frac{\gamma}{2}}+\lambda_{n}^{-(\gamma+\frac{\delta}{2})}\right)o(1).
\end{equation}
Integrating \eqref{PSBKV7} tow times, one gets
\begin{equation}\label{PSBKV61}
i\lambda_{n}\int_{0}^{\xi_{n}}\!\!\!\!\int_{0}^{t}v_{2,n}(s)\,\ud s\,\ud t+M_{n}(\xi_{n})-M_{n}(0)-M_{n}'(0)\xi_{n}=\lambda_{n}^{-\gamma}\int_{0}^{\xi_{n}}\!\!\!\!\int_{0}^{t}g_{2,n}(s)\,\ud s\,\ud t.
\end{equation}
Then by combining \eqref{PSBKV61} with \eqref{PSBKV73}, \eqref{PSBKV58}, \eqref{PSBKV59} and \eqref{PSBKV60} we have
\begin{equation}\label{PSBKV62}
|M_{n}(0)|=\left(\lambda_{n}^{p_{0}}+\lambda_{n}^{1-\frac{3-\beta}{2}\delta-\frac{\gamma}{2}}+\lambda_{n}^{-(\gamma+\frac{3\delta}{2})}\right)o(1).
\end{equation}
Uses the trace formula and \eqref{PSBKV4} one has
\begin{equation}\label{PSBKV70}
|u_{2,n}(0)|+|u_{2,n}(\xi_{n})|+|u_{2,n}'(0)|+|u_{2,n}'(\xi_{n})|\leq  C\|u_{2,n}\|_{H^{2}(0,1)}=O(1).
\end{equation}
Using the transmission conditions \eqref{PSBKV19}, \eqref{PSBKV21} and \eqref{PSBKV22} to substitute \eqref{PSBKV56}, \eqref{PSBKV60}, \eqref{PSBKV62} and \eqref{PSBKV70} into \eqref{PSBKV13} we follow
\begin{equation}\label{PSBKV63}
\|u_{1,n}''\|_{L^{2}(-1,0)}+\|v_{1,n}\|_{L^{2}(-1,0)}=o(1)
\end{equation}
providing that \eqref{PSBKV86}, \eqref{PSBKV51} hold true and
\begin{equation}\label{PSBKV65}
p\leq 0\qquad\text{and}\qquad 1-\frac{1-\beta}{2}\delta-\frac{\gamma}{2}\leq0
\end{equation}
where $-1<\beta<1$ satisfying \eqref{PSBKV108} and \eqref{PSBKV109}.

Since $a$ is an increasing function, we get
\begin{align*}
\|\sqrt{a}u_{2,n}''\|_{L^{2}(0,1)}&\geq\|\sqrt{a}u''\|_{L^{2}(\xi_{n},1)}
\\
&\geq\min_{\xi_{n}\leq x\leq 1}\left(\sqrt{a(x)}\right)\|u_{2,n}''\|_{L^{2}(\xi_{n},1)}
\\
&\geq\sqrt{a(\xi_{n})}\|u_{2,n}''\|_{L^{2}(\xi_{n},1)}
\\
&\geq C \xi_{n}^{\frac{\alpha}{2}}\|u_{2,n}''\|_{L^{2}(\xi_{n},1)}
\\
&\geq C \lambda_{n}^{-\frac{\alpha\delta}{2}}\|u_{2,n}''\|_{L^{2}(\xi_{n},1)}.
\end{align*}
Then from \eqref{PSBKV12} we obtain
\begin{equation}\label{PSBKV64}
\|u_{2,n}''\|_{L^{2}(\xi_{n},1)}=o\left(\lambda_{n}^{\frac{\alpha\delta}{2}-\frac{\gamma}{2}-1}\right).
\end{equation}
Multiplying \eqref{PSBKV7} by $\lambda_{n}^{-\gamma}\overline{u}_{2,n}$ and integration over $(0,\xi_{n})$, by an integration by parts we arrive 
\begin{align}\label{PSBKV66}
\|u_{2,n}''\|_{L^{2}(0,\xi_{n})}^{2}&=-i\lambda_{n}\int_{0}^{\xi_{n}}v_{2,n}(x).\overline{u}_{2,n}(x)\,\ud x-\int_{0}^{\xi_{n}} a(x)v_{2,n}''(x).\overline{u}_{2,n}(x)\,\ud x
\\
&-M_{n}'(\xi_{n})\overline{u}_{2,n}(\xi_{n})+M_{n}'(0)\overline{u}_{2,n}(0)+M_{n}(\xi_{n})\overline{u}_{2,n}'(\xi_{n})-M_{n}(0)\overline{u}_{2,n}'(0)+o\left(\lambda_{n}^{-\gamma}\right).\nonumber
\end{align}
From Lemma \ref{PSBKV27}, Lemma \ref{PSBKV106}, \eqref{PSBKV11} and \eqref{PSBKV12}, we have 
\begin{align}\label{PSBKV74}
\lambda_{n}\left|\int_{0}^{\xi_{n}}v_{2,n}(x).\overline{u}_{2,n}(x)\,\ud x\right|&=\lambda_{n}\left|\int_{0}^{\xi_{n}}x^{\frac{\beta_{0}'}{2}}v_{2,n}(x).x^{-\frac{\beta_{0}'}{2}}\overline{u}_{2,n}(x)\,\ud x\right|\nonumber
\\
&\leq\lambda_{n}\|x^{\frac{\beta_{0}'}{2}}v_{2,n}\|_{L^{2}(0,1)}\|x^{-\frac{\beta_{0}'}{2}}u_{2,n}\|_{L^{2}(0,1)}\nonumber
\\
&\leq C\lambda_{n}\|x^{\frac{\beta_{0}''}{2}}v_{2,n}'\|_{L^{2}(0,1)}\|x^{\frac{\beta_{0}''}{2}}u_{2,n}'\|_{L^{2}(0,1)}\nonumber
\\
&\leq C\lambda_{n}\|x^{\frac{\alpha}{2}}v_{2,n}''\|_{L^{2}(0,1)}\|x^{\frac{\alpha}{2}}u_{2,n}''\|_{L^{2}(0,1)}\nonumber
\\
&=o\left(\lambda_{n}^{-\gamma}\right).
\end{align}
where $0\leq\beta_{0}'<1$ and $\beta_{0}''>-1$  can be chosen in such a way
\begin{align}
\beta_{0}'\geq \beta_{0}''-2 \text{ if }\beta_{0}''>1, \quad\beta_{0}'=0\text{ if }\beta_{0}''=1\quad\text{ and }\quad\beta_{0}'>-1 \text{ if } 0\leq \beta_{0}''<1\label{PSBKV110}
\\
\beta_{0}''\geq \alpha-2 \text{ if }1<\alpha<5, \quad \beta_{0}''=0\text{ if }\alpha=1 \quad\text{ and }\quad\beta_{0}''>-1 \text{ if } 0\leq \alpha<1.\label{PSBKV111}
\end{align}
Using \eqref{PSBKV4}, \eqref{PSBKV11} and \eqref{PSBKV18} we obtain
\begin{align}\label{PSBKV71}
\left|\int_{0}^{\xi_{n}} a(x)v_{2,n}''(x).\overline{u}_{2,n}''(x)\,\ud x\right|&\leq \max_{0\leq x\leq \xi_{n}}\{\sqrt{a(x)}\}\|u_{2,n}''\|_{L^{2}(0,\xi_{n})}.\|a^{\frac{1}{2}}v_{2,n}''\|_{L^{2}(0,\xi_{n})}\nonumber
\\
&\leq \lambda_{n}^{-\alpha\delta}\|u_{2,n}''\|_{L^{2}(0,\xi_{n})}.\|a^{\frac{1}{2}}v_{2,n}''\|_{L^{2}(0,\xi_{n})}\nonumber
\\
&=o\left(\lambda_{n}^{-\alpha\delta-\frac{\gamma}{2}-1}\right).
\end{align}
Inserting \eqref{PSBKV73}, \eqref{PSBKV60}, \eqref{PSBKV62}, \eqref{PSBKV70}, \eqref{PSBKV74} and \eqref{PSBKV71} into \eqref{PSBKV66} we arrive at
\begin{equation}\label{PSBKV67}
\|u_{2,n}''\|_{L^{2}(0,\xi_{n})}=o(1)
\end{equation}
providing that \eqref{PSBKV33}, \eqref{PSBKV30}, \eqref{PSBKV86}, \eqref{PSBKV108}, \eqref{PSBKV109}, \eqref{PSBKV51} and \eqref{PSBKV65} hold true.
\\
So that, \eqref{PSBKV64} and \eqref{PSBKV67} leads to
\begin{equation}\label{PSBKV68}
\|u_{2,n}''\|_{L^{2}(0,1)}=o(1),
\end{equation}
providing that \eqref{PSBKV33}, \eqref{PSBKV30}, \eqref{PSBKV86}, \eqref{PSBKV108}, \eqref{PSBKV109}, \eqref{PSBKV51}, \eqref{PSBKV65} and
\begin{equation}\label{PSBKV85}
\frac{\alpha\delta}{2}-\frac{\gamma}{2}-1\leq 0
\end{equation}
hold true.
\\
The combination of \eqref{PSBKV63} and \eqref{PSBKV68} with \eqref{PSBKV17} leads to
\begin{equation}\label{PSBKV69}
\|v_{2,n}\|_{L^{2}(0,1)}=o(1).
\end{equation}
Thus, due to \eqref{PSBKV63}, \eqref{PSBKV68} and \eqref{PSBKV69} we follow
$$
\|(u_{n},v_{n})\|=o(1)
$$
which is a contradiction with \eqref{PSBKV4} as long as  $\gamma$, $\delta$, $\beta_{0}$, $\beta'$, $\beta$ satisfying conditions \eqref{PSBKV33}, \eqref{PSBKV30}, \eqref{PSBKV86}, \eqref{PSBKV108}, \eqref{PSBKV109}, \eqref{PSBKV51}, \eqref{PSBKV65} and \eqref{PSBKV85} hold true.
\section{Determination of the decay rate}\label{DTBKV}
To conclude the proof of Theorem \ref{IBKV3} it remains to prove that $\ds\gamma=\frac{2}{\tau(\alpha)}+\varepsilon$ for every $\varepsilon>0$. To do this we want to make $\gamma$ minimal subject to the constraints \eqref{PSBKV86}, \eqref{PSBKV51}, \eqref{PSBKV65} and \eqref{PSBKV85}. In other words, we need to solve the following ``optimality problem''
\begin{equation}\label{PSBKV87}
\left\{\begin{array}{l}
\inf(\gamma)
\\
-1<\beta<1,\;\beta'>-1,\;\beta_{0}>-1
\\
p_{0}\leq 0
\\
\ds \left(\frac{\alpha}{\alpha+2}<\delta<\frac{1}{\alpha}\text{ and } 0<\alpha<2\right) \text{ or } \left(\delta>\frac{1}{2}\text{ and } \delta\geq\frac{1}{\alpha}\right)
\\
p\leq 0
\\
\ds\frac{\alpha\delta}{2}-\frac{\gamma}{2}-1\leq 0
\\
\ds1-\frac{1-\beta}{2}\delta-\frac{\gamma}{2}\leq 0
\end{array}\right.
\end{equation}
where $\beta_{0}$ is satisfying \eqref{PSBKV31}, $\beta'$ is satisfying \eqref{PSBKV33}-\eqref{PSBKV30} and $\beta$ is satisfying \eqref{PSBKV108}-\eqref{PSBKV109}.

Several cases are discussed in the sequel. We note that by the definition of $p$ and due to the therd and the seventh lines of \eqref{PSBKV87} condition $p\leq 0$ can be canceled out. Hence, taking into account the expression of $p_{0}$, \eqref{PSBKV87} is written as follow
\begin{equation}\label{PSBKV93}
\left\{\begin{array}{l}
\inf(\gamma)
\\
\ds \left(\frac{\alpha}{\alpha+2}<\delta<\frac{1}{\alpha}\text{ and } 0<\alpha<2\right) \text{ or } \left(\delta>\frac{1}{2}\text{ and } \delta\geq\frac{1}{\alpha}\right)
\\
-1<\beta<1,\;\beta'>-1,\;\beta_{0}>-1
\\
\gamma\geq (1+\beta')\delta
\\
\gamma\geq (1+\beta_{0})\delta
\\
\gamma\geq (\alpha+1)\delta-2
\\
\gamma\geq (1-\alpha)\delta
\\
\ds\gamma\geq (\beta'-1)\delta+2
\\
\ds\gamma\geq (\alpha+3)\delta-2
\\
\ds\gamma\geq (3-\alpha)\delta
\\
\gamma\geq \alpha\delta-2
\\
\gamma\geq (\beta-1)\delta+2.
\end{array}\right.
\end{equation}
It is important for the sequel to notice that $\beta_{0}$ can be chosen as near as we want to $-1$ if $0<\alpha<1$, equal to $0$ if $\alpha=1$ and equal to $\alpha-2$ if $1<\alpha<5$. Concerning $\beta'$ or $\beta$ there are chosen as near as we want to $-1$ if $0<\alpha\leq3$ and equal to $\alpha-4$ if $3<\alpha<5$, in particular we can suppose $\beta=\beta'$. Hence \eqref{PSBKV93} is reduced to
\begin{equation}\label{PSBKV88}
\left\{\begin{array}{l}
\inf(\gamma)
\\
\ds \left(\frac{\alpha}{\alpha+2}<\delta<\frac{1}{\alpha}\text{ and } 0<\alpha<2\right) \text{ or } \left(\delta>\frac{1}{2}\text{ and } \delta\geq\frac{1}{\alpha}\right)
\\
-1<\beta<1,\;\beta_{0}>-1
\\
\gamma\geq (1+\beta)\delta
\\
\gamma\geq (1+\beta_{0})\delta
\\
\gamma\geq (\beta-1)\delta+2
\\
\gamma\geq (\alpha+3)\delta-2
\\
\gamma\geq (3-\alpha)\delta.
\end{array}\right.
\end{equation}
 
\underline{\textbf{Case 1:} $0<\alpha\leq1$.} Let's first focus on the case $\ds\frac{\alpha}{\alpha+2}<\delta<\frac{1}{\alpha}$. Since $\beta$ and $\beta'$ are arbitrary small real numbers strictly greater than $-1$ then \eqref{PSBKV88} is reduced to the following problem
\begin{equation}\label{PSBKV89}
\left\{\begin{array}{l}
\inf(\gamma)
\\
\gamma>0 \text{ if } 0<\alpha<1\text{ and }\gamma\geq\delta \text{ if }\alpha=1
\\
\ds\frac{\alpha}{\alpha+2}<\delta<\frac{1}{\alpha}
\\
\ds\gamma>-2\delta+2
\\
\ds\gamma\geq (\alpha+3)\delta-2
\\
\ds\gamma\geq (3-\alpha)\delta.
\end{array}\right.
\end{equation}
Thanks to the second and the ninth lines of \eqref{PSBKV89} the problem is simply written
\begin{equation}\label{PSBKV90}
\left\{\begin{array}{l}
\inf(\gamma)
\\
\ds\frac{\alpha}{\alpha+2}<\delta< \frac{1}{\alpha}
\\
\ds \gamma\geq (3-\alpha)\delta=\gamma_{1}(\delta)
\\
\gamma> -2\delta+2=\gamma_{2}(\delta).
\end{array}\right.
\end{equation}
Noting that $\gamma_{1}(\delta)\geq\gamma_{2}(\delta)$ if and only if $\ds \delta\geq \frac{2}{5-\alpha}$ and $\ds \frac{\alpha}{\alpha+2}<\frac{2}{5-\alpha}<1$. Then since  $\gamma_{1}$ is increasing and $\gamma_{2}$ is decreasing we have 
\begin{equation}\label{PSBKV91}
\ds\inf_{\frac{\alpha}{\alpha+2}<\delta<\frac{1}{\alpha}}(\gamma)=\gamma_{1}\left(\frac{2}{5-\alpha}\right)=\gamma_{2}\left(\frac{2}{5-\alpha}\right)=\frac{2(3-\alpha)}{5-\alpha}.
\end{equation}
In the other hand, taking into account that $\gamma_{1}$ is an increasing function then 
$$
\inf_{\delta\geq\frac{1}{\alpha}}(\gamma)\geq \gamma_{1}\left(\frac{1}{\alpha}\right)>\gamma_{1}\left(\frac{2}{5-\alpha}\right).
$$
which implies obviously that $\ds \inf(\gamma)=\frac{2(3-\alpha)}{5-\alpha}$ and therefore due to \eqref{PSBKV90} and \eqref{PSBKV91} we only need to take
$$
\ds\gamma=\frac{2(3-\alpha)}{5-\alpha}+\varepsilon
$$
where $\varepsilon$ is an arbitrary small strictly non-negative real number.

\underline{\textbf{Case 2:} $1<\alpha<2$.} Here we take $\beta_{0}=\alpha-2$ and $\beta>-1$ is arbitrary small. Let's first focus on the case where $\ds\frac{\alpha}{\alpha+2}<\delta<\frac{1}{\alpha}$, then problem \eqref{PSBKV88} is written as follow 
\begin{equation}\label{PSBKV92}
\left\{\begin{array}{l}
\inf(\gamma)
\\
\ds\frac{\alpha}{\alpha+2}<\delta<\frac{1}{\alpha}
\\
\gamma\geq (\alpha-1)\delta
\\
\ds\gamma>-2\delta+2
\\
\ds\gamma\geq (\alpha+3)\delta-2
\\
\ds\gamma\geq (3-\alpha)\delta.
\end{array}\right.
\end{equation}
With the help of the second and the sixth lines of \eqref{PSBKV92} this problem is recast
\begin{equation}\label{PSBKV94}
\left\{\begin{array}{l}
\inf(\gamma)
\\
\ds\frac{\alpha}{\alpha+2}<\delta<\frac{1}{\alpha}
\\
\ds\gamma\geq (3-\alpha)\delta=\gamma_{1}(\delta)
\\
\gamma> -2\delta+2=\gamma_{2}(\delta).
\end{array}\right.
\end{equation}
As it is noted above $\gamma_{1}(\delta)\geq\gamma_{2}(\delta)$ if and only if $\ds \delta\geq\frac{2}{5-\alpha}$. However, $\ds \frac{\alpha}{\alpha+2}<\frac{2}{5-\alpha}<\frac{1}{\alpha}$ if $\ds 1<\alpha<\frac{5}{3}$, otherwise $\ds\frac{1}{\alpha}\leq\frac{2}{5-\alpha}$. Hence by using the same arguments as previously we 
\begin{equation}\label{PSBKV100}
\ds\inf(\gamma)=\gamma_{1}\left(\frac{2}{5-\alpha}\right)=\gamma_{2}\left(\frac{2}{5-\alpha}\right)=\frac{2(3-\alpha)}{5-\alpha}\quad\text{if }1<\alpha<\frac{5}{3}.
\end{equation}
and 
\begin{equation}\label{PSBKV114}
\ds\inf_{\frac{\alpha}{\alpha+2}<\delta<\frac{1}{\alpha}}(\gamma)=\gamma_{2}\left(\frac{1}{\alpha}\right)=\frac{2(\alpha-1)}{\alpha}\quad\text{if }\frac{5}{3}\leq\alpha<2.
\end{equation}
We consider now the sub-case where $\ds\delta\geq \frac{1}{\alpha}$ and $\ds\frac{5}{3}\leq\alpha<2$, then \eqref{PSBKV88} is written as follow
\begin{equation}\label{PSBKV95}
\left\{\begin{array}{l}
\inf(\gamma)
\\
\ds\delta\geq\frac{1}{\alpha}
\\
\gamma\geq (\alpha-1)\delta
\\
\ds\gamma>-2\delta+2
\\
\ds\gamma\geq (\alpha+3)\delta-2
\\
\ds\gamma\geq (3-\alpha)\delta.
\end{array}\right.
\end{equation}
Due to the second and the fifth lines of \eqref{PSBKV95}, we reduce the problem to the following
\begin{equation}\label{PSBKV97}
\left\{\begin{array}{l}
\inf(\gamma)
\\
\ds\delta\geq\frac{1}{\alpha}
\\
\gamma\geq (\alpha+3)\delta-2=\gamma_{3}(\delta)
\\
\ds\gamma>-2\delta+2=\gamma_{2}(\delta).
\end{array}\right.
\end{equation}
Noting that $\gamma_{2}(\delta)\geq\gamma_{1}(\delta)$ if and only if $\ds\delta\geq \frac{4}{5+\alpha}$ and $\ds\frac{1}{\alpha}\leq \frac{4}{5+\alpha}$ if $\ds\alpha\geq\frac{5}{3}$. Since $\gamma_{3}$ is increasing and $\gamma_{2}$ is decreasing we deduce that
\begin{equation}\label{PSBKV98}
\begin{array}{ll}
\ds\inf_{\delta\geq\frac{1}{\alpha}}(\gamma)=\gamma_{2}\left(\frac{4}{5+\alpha}\right)=\gamma_{3}\left(\frac{4}{5+\alpha}\right)=\frac{2(1+\alpha)}{5+\alpha}&\ds\text{if }\frac{5}{3}\leq\alpha< 2.
\end{array}
\end{equation}
Therefore by combining \eqref{PSBKV114} and \eqref{PSBKV98} we find
\begin{equation}\label{PSBKV99}
\begin{array}{ll}
\ds\inf(\gamma)=\min\left\{\gamma_{2}\left(\frac{1}{\alpha}\right),\gamma_{2}\left(\frac{4}{5+\alpha}\right)\right\}=\gamma_{2}\left(\frac{4}{5+\alpha}\right)=\frac{2(1+\alpha)}{5+\alpha}&\ds\text{if }\frac{5}{3}\leq\alpha<2.
\end{array}
\end{equation}
So that, from \eqref{PSBKV100} and \eqref{PSBKV99} we only have to take
\begin{equation}\label{PSBKV96}
\gamma=\left\{\begin{array}{ll}
\ds\frac{2(3-\alpha)}{5-\alpha}+\varepsilon&\ds\text{if }1<\alpha<\frac{5}{3}
\\
\ds\frac{2(1+\alpha)}{5+\alpha}+\varepsilon&\ds\text{if }\frac{5}{3}\leq\alpha<2. 
\end{array}\right.
\end{equation}
where $\varepsilon$ is an arbitrary small strictly positive real number.

\underline{\textbf{Case 3:} $2\leq \alpha\leq3$.} Here we take $\beta_{0}=\alpha-2$ and $\beta>-1$ is small. In this case since $\alpha>2$ then \eqref{PSBKV88} is written as follow
\begin{equation}\label{PSBKV102}
\left\{\begin{array}{l}
\inf(\gamma)
\\
\ds\delta>\frac{1}{2}
\\
\gamma\geq (\alpha-1)\delta
\\
\gamma> -2\delta+2
\\
\gamma\geq (\alpha+3)\delta-2
\\
\gamma\geq (3-\alpha)\delta.
\end{array}\right.
\end{equation}
From the second and the fifth lines of \eqref{PSBKV102} we can reduce the problem is equivalent
\begin{equation}\label{PSBKV116}
\left\{\begin{array}{l}
\inf(\gamma)
\\
\ds\delta>\frac{1}{2}
\\
\gamma> -2\delta+2=\gamma_{2}(\delta)
\\
\gamma\geq (\alpha+3)\delta-2=\gamma_{3}(\delta).
\end{array}\right.
\end{equation}
Since in this case we have $\ds\frac{4}{5+\alpha}>\frac{1}{2}$ then by using the same arguments as previously we arrive 
\begin{equation}\label{PSBKV104}
\begin{array}{ll}
\ds\inf(\gamma)=\gamma_{2}\left(\frac{4}{5+\alpha}\right)=\gamma_{4}\left(\frac{4}{5+\alpha}\right)=\frac{2(1+\alpha)}{5+\alpha}&\ds\text{if }2\leq \alpha<3.
\end{array}
\end{equation}
Hence, \eqref{PSBKV104} and \eqref{PSBKV116} give
\begin{equation*}
\gamma=\begin{array}{ll}
\ds\frac{2(1+\alpha)}{5+\alpha}+\varepsilon&\ds\text{if }2\leq\alpha\leq3
\end{array}
\end{equation*}
where $\varepsilon$ is an arbitrary small strictly positive real number.

\underline{\textbf{Case 4:} $3<\alpha<5$.} By taking $\beta_{0}=\alpha-2$ and $\beta=\alpha-4$ then \eqref{PSBKV87} is equivalent to
\begin{equation}\label{PSBKV103}
\left\{\begin{array}{l}
\inf(\gamma)
\\
\ds\delta>\frac{1}{2}
\\
\gamma\geq (\alpha-1)\delta
\\
\gamma\geq (\alpha-3)\delta
\\
\gamma\geq (\alpha-5)\delta+2
\\
\gamma\geq (\alpha+3)\delta-2
\\
\gamma\geq (3-\alpha)\delta.
\end{array}\right.
\end{equation}
Due to the second and the seventh lines of \eqref{PSBKV103} the problem is reduced to the following
\begin{equation}\label{PSBKV115}
\left\{\begin{array}{l}
\inf(\gamma)
\\
\ds\delta>\frac{1}{2}
\\
\gamma\geq (\alpha+3)\delta-2=\gamma_{3}.
\end{array}\right.
\end{equation}
Since $\gamma_{3}$ is an increasing function, one gets
\begin{equation}\label{PSBKV105}
\begin{array}{ll}
\ds\inf(\gamma)=\gamma_{3}\left(\frac{1}{2}\right)=\frac{\alpha-1}{2}&\text{if }3<\alpha<5.
\end{array}
\end{equation}
Hence, from \eqref{PSBKV115} and \eqref{PSBKV105} we deduce
\begin{equation*}
\gamma=\begin{array}{ll}
\ds\frac{\alpha-1}{2}+\varepsilon&\ds\text{if }3<\alpha<5. 
\end{array}
\end{equation*}
where $\varepsilon$ is an arbitrary small strictly positive real number. This completes the proof.

\end{document}